\documentclass[11pt]{article}
\usepackage{fullpage}
\usepackage{amssymb,amsmath,graphicx,amsthm}
\usepackage{color, comment}
\usepackage{graphicx}

\usepackage[utf8]{inputenc}

\usepackage{hyperref}
\hypersetup{
    colorlinks=true,       
    linkcolor=red,          
    citecolor=green,        
    filecolor=magenta,      
    urlcolor=cyan           
}


\newtheorem{thm}{Theorem}[section]
\newtheorem{cor}[thm]{Corollary}
\newtheorem{remark}[thm]{Remark}

\newtheorem{lem}[thm]{Lemma}
\newtheorem{prop}[thm]{Proposition}

\newtheorem*{thm*}{Theorem}

\theoremstyle{definition}
\newtheorem{dfn}[thm]{Definition}
\theoremstyle{remark}

\numberwithin{equation}{section}



\newcommand{\IP}[1]{\left<#1\right>}

\newcommand{\set}[1]{\left\{#1\right\}}

\newcommand{\sr}[1]{\left(#1\right)}

\newcommand{\Integer}{\mathbb{Z}}

\newcommand{\Z}{\Integer}

\newcommand{\Q}{\mathbb{Q}}
\newcommand{\R}{\mathbb{R}}

\newcommand{\eps}{\varepsilon}



\DeclareMathOperator{\E}{\mathbb{E}}     

\DeclareMathOperator{\Ext}{Ext}

\DeclareMathOperator{\diam}{diam}

\renewcommand{\Pr}{}
\let\Pr\relax
\DeclareMathOperator{\Pr}{\mathbb{P}}

\newcommand{\1}[1]{\mathbf{1}_{\set{ #1 } }}

\def\squareforqed{\hbox{\rlap{$\sqcap$}$\sqcup$}}
\def\qed{\ifmmode\squareforqed\else{\unskip\nobreak\hfil
\penalty50\hskip1em\null\nobreak\hfil\squareforqed
\parfillskip=0pt\finalhyphendemerits=0\endgraf}\fi}



\newcommand{\ignore}[1]{ }



\newcommand{\p}{\partial}

\newcommand{\dist}{\mathrm{dist}}
\newcommand{\vphi}{\varphi}

\newcommand{\Ee}{\mathcal{E}}

\newcommand{\Haus}{\mathrm{Haus}}

\newcommand{\cG}{\mathcal{G}}
\newcommand{\cA}{\mathcal{A}}
\newcommand{\cB}{\mathcal{B}}
\newcommand{\cC}{\mathcal{C}}
\newcommand{\cD}{\mathcal{D}}

\def\cE{\mathcal{E}}
\def\cF{\mathcal{F}}

\def\cL{\mathcal{L}}
\def\cM{\mathcal{M}}
\def\cR{\mathcal{R}}
\def\cS{\mathcal{S}}
\def\cX{\mathcal{X}}

\DeclareMathOperator{\tmix}{\textit{t}_{mix}}

\DeclareMathOperator{\Reff}{\cR_{\text{eff}}}

\newcommand{\ip}[1]{\langle #1 \rangle}

\def\pd{\partial}


\usepackage[usenames,dvipsnames]{pstricks}
 \usepackage{epsfig}
 \usepackage{pst-grad} 
 \usepackage{pst-plot} 

\begin{document}

\title{Condensation of a self-attracting random walk}
\author{by Nathana\"el Berestycki\thanks{University of Cambridge. Supported in part by EPSRC grants EP/GO55068/1 and EP/I03372X/1} \and Ariel Yadin\thanks{Ben Gurion University of the Negev.}}

\maketitle

\begin{abstract}
We introduce a Gibbs measure on nearest-neighbour paths of length $t$ in the Euclidean $d$-dimensional lattice, where each path is penalised by a factor proportional to the size of its boundary and an inverse temperature $\beta$.
We prove that, for all $\beta>0$, the random walk condensates to a set of diameter $(t/\beta)^{1/3}$ in dimension $d=2$, up to a multiplicative constant. In all dimensions $d\ge 3$, we also prove that the volume is bounded above by $(t/\beta)^{d/(d+1)}$ and the diameter is bounded below by $(t/\beta)^{1/(d+1)}$.
 Similar results hold for a random walk conditioned to have local time greater than $\beta$ everywhere in its range when $\beta$ is larger than some explicit constant, which in dimension two is the logarithm of the connective constant.
\end{abstract}

 \medskip \noindent {\small \centerline{\textbf{Résumé.}}} 
 
{\small Nous introduisons une mesure de Gibbs sur les chemins de longueur $t$ dans le réseau Euclidien de dimension $d$, telle qu'un chemin donné est penalisé par un facteur proportionnel à la taille de sa frontière et l'inverse d'une température $\beta>0$. Nous montrons qu'en dimension $d=2$, la marche aléatoire se condense dans un ensemble de diamètre $(t/\beta)^{1/3}$ à une constante multiplicative près. En dimensions $d\ge 3$, nous montrons que la marche occupe un volume inférieur à $(t/\beta)^{d/(d+1)}$ et son diamètre est au moins $(t/\beta)^{1/(d+1)}$. Des résultats similaires sont obtenus pour une marche aléatoire conditionnée à avoir un temps local supérieur à $\beta$ en chaque point visité, pourvu que $\beta$ soit supérieur à une constante explicite qui en deux dimensions est égale au logarithme de la constante de connectivité. }

\bigskip \noindent \textbf{Keywords:} Gibbs measure, condensation, self-attractive random walk, Wulff crystal, large deviations, Donsker--Varadhan principle.

\bigskip \noindent \textbf{MSC 2010 classification: } 60K35, 60J27, 60F10

\begin{figure}[h]\label{fig:simul}\centering
\includegraphics[scale=.3]{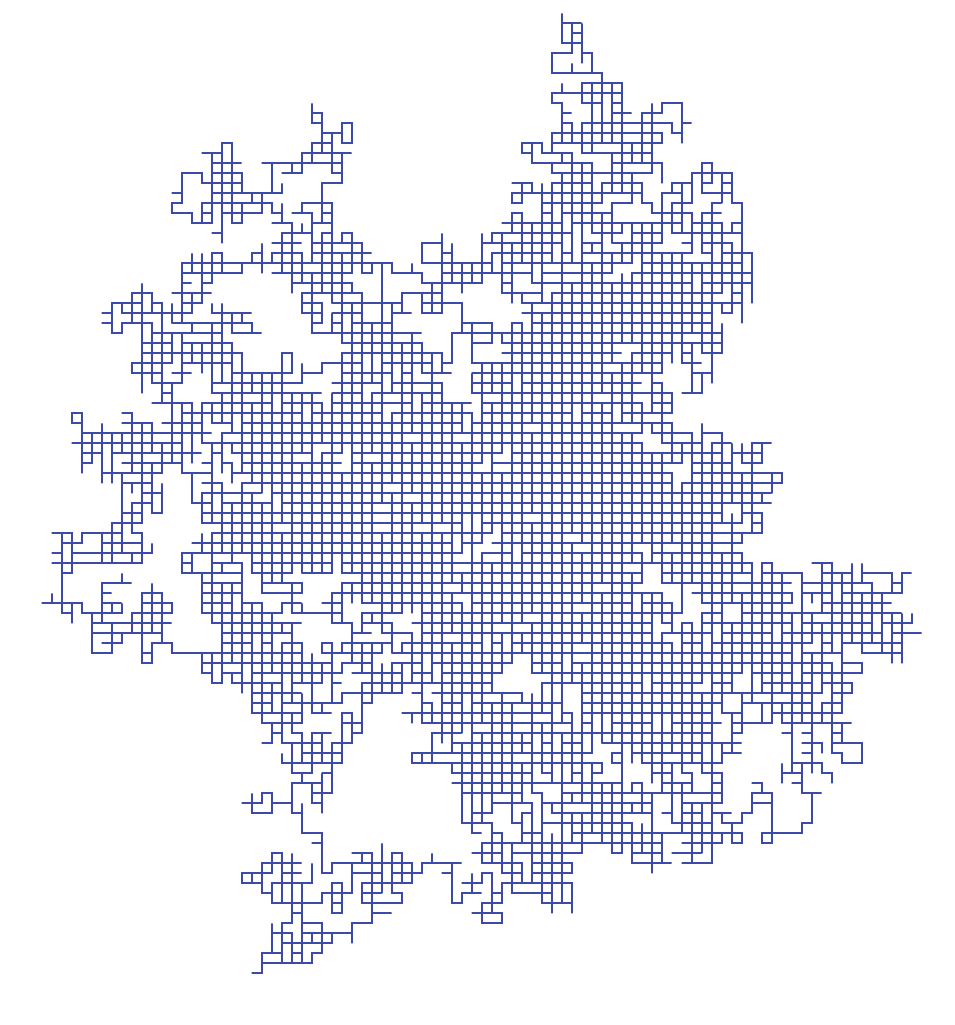}
\includegraphics[scale=.3]{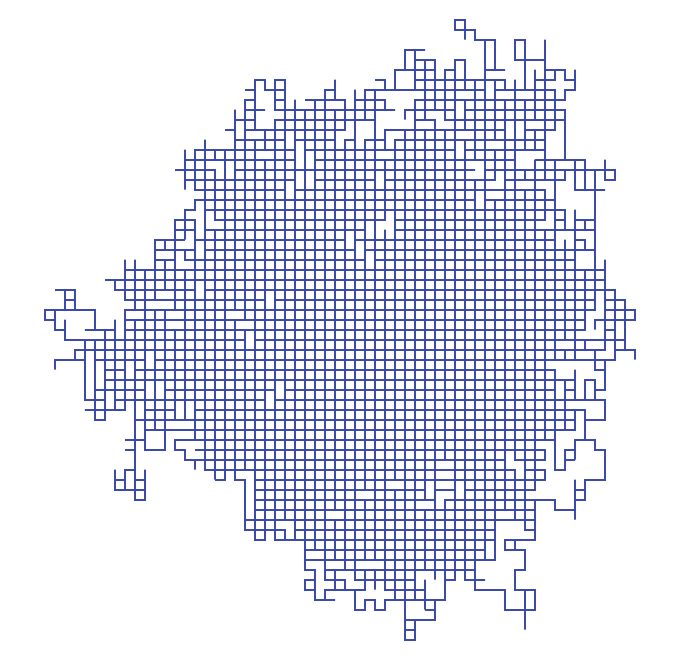}
\includegraphics[scale=.3]{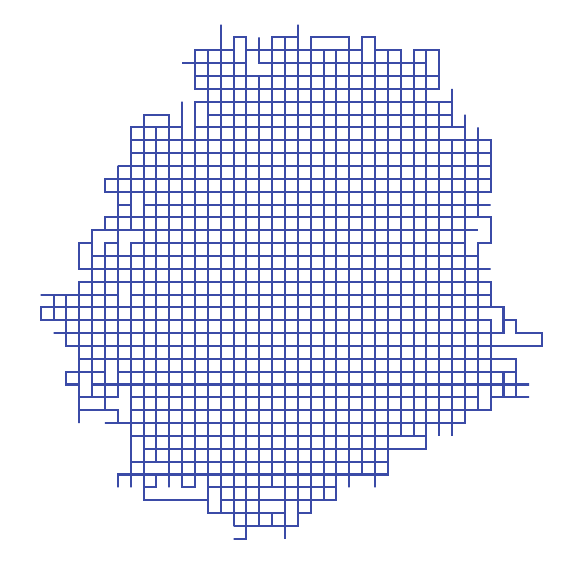}
\includegraphics[scale=.3]{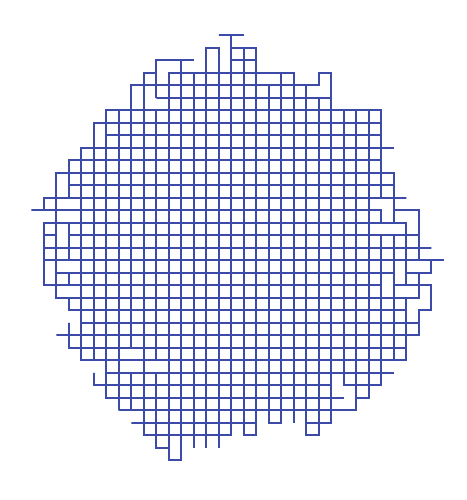}

\caption{Simulations using a Gibbs sampler algorithm of a random walk cluster with $t = 25,\!000$ steps, corresponding to $ \beta = 0.01$, $\beta = 0.1$, $\beta = 1$, and $\beta = 2$.}
\end{figure}

\tableofcontents

\section{Introduction}

\subsection{Statement of the main results}

Let $d\ge 2$ and let $\Omega$ be the space of nearest neighbour, right continuous, infinite paths $(\omega_t,t\in[0, \infty))$ on $\mathbb{Z}^d$, and let $(X_t(\omega), t\ge 0)$ be the canonical process. For $x \in \Z^d$, let $\Pr_x$ denote the law of simple random walk on $(\Omega, \cF)$ in continuous time where each edge has rate one started from $x$, and where $\cF$ denotes the $\sigma$-field generated by $X$. We call $\Pr = \Pr_0$.

Our main result deals with geometric properties of some penalisations of random walks on $\mathbb{Z}^d$ by their boundary. More precisely, we introduce a Gibbs measure $\mu = \mu_t$ on random paths defined as follows. Let $R_t = \{v \in \Z^d: X_s = v \text{ for some } s \le t\}$ denote the \emph{range} of the walk at time $t$. For a given time $t$, we consider the Hamiltonian $H$ given by
\begin{equation}\label{defH}
H(\omega) = |\p R_t|,
\end{equation}
where for a set $G$, $\pd G = \{ x \in G: x \sim y \text{ for some } y \notin G\}$ is the (inner) vertex-boundary of $G$. (Here $x\sim y$ means that $x$ is a neighbor of $y$ in $\Z^d$). The associated Gibbs measure on random paths $\mu = \mu_t$ is obtained by considering the measure $\mu$ defined by
\begin{equation}\label{gibbs}
\frac{d\mu}{d\Pr}(\omega) = \frac1{Z}\exp( - \beta H(\omega))
\end{equation}
on $\cF$.
Here $\beta >0$ is a positive number playing the role of inverse temperature and $Z = Z(t,\beta) = \E( \exp( - \beta |\p R_t|)) $ is a normalising factor called the partition function. In plain words, the Gibbs measure $\mu$ penalises every site on the boundary of the range $R_t$ by a fixed amount $e^{-\beta}$. Hence $\mu$ favours ``highly condensed" configurations. Interpreting the random walk $(X_0,\ldots, X_t)$ as a chain of monomers, the Gibbs measure $\mu$ describes the law of a diluted polymer in a poor solvent.

\medskip We will be interested in describing the geometry of $R_t$. With a constraint on its maximal volume and a penalty in case of a large boundary, this problem is closely related to the question of \textbf{phase separation} in statistical mechanics. This is a classical topic with a long and distinguished history, for which we mention only a few major milestones. Traditionally, microscopic models for this phenomenon have been based on the framework of either percolation or the Ising model. Either way, a key goal is to prove a shape theorem for the cluster. Such a result can then be viewed as a microscopic justification for the Wulff  construction (\cite{Wulff}), which is a method to determine the equilibrium shape of crystals based on surface energy minimisation.  In the percolation context, a rigorous derivation of the limiting shape was first given by Alexander, Chayes and Chayes \cite{wulff1} in two dimensions, while in the context of the Ising model, this was achieved slightly earlier in a celebrated work of Dobrushin, Koteck\'y and Shlosman \cite{DKS} (for which preliminary announcements can be found in \cite{DKSprelim1, K}, as noted by an anonymous referee).
This result was derived again by Pfister \cite{Pfister}, see also the papers by Ioffe and Schonmann \cite{IoffeSchonmann} extending the results of \cite{DKS} to all subcritical temperatures. The three-dimensional case, which is the most delicate, was handled only relatively recently by Cerf \cite{cerf}, with earlier work by Bodineau \cite{Bodineau}, Cerf and Pisztora \cite{CerfPistora} as well as Bodineau, Ioffe and Velenik \cite{BodineauIoffeVelenik}. See \cite{MinlosSinai} for an early reference on the problem of phase separation in the context of the Ising model,  \cite{cerf} for a recent monograph giving a detailed overview of the subject.

As mentioned above, until now the question of phase separation has been studied rigorously mostly in the context of percolation and the Ising model. As far as we are aware, the present paper is the first attempt to study the question through genuinely $d$-dimensional random walks. Note however that in the 1-dimensional SOS model, namely for a 1+1-dimensional random walk conditioned on describing an atypically large arithmetic area, it was proven by Dobrushin and Hryniv in  \cite{DobrushinHryniv} that a limiting Wulff shape arises.

Our main result gives precise estimates for the condensation effect that results from the attractive self-interaction in dimension $d\ge 2$. If $G \subset \Z^d$, let $\diam G$ denote the (Euclidean) diameter of $G$:
$$
\diam G = \sup\{ |z - w| \ : \  z,w \in G\}.
$$
While we are currently unable to derive a shape theorem, our main result suggests that the limit shape, if it exists, has diameter of order $t^{1/(d+1)}$.

\begin{thm}\label{T:2dimGibbs}
  \begin{enumerate}
  \item Let $d= 2$. For any $\beta_0>0$ there exist positive constants $c_1, c_2$ depending only on $\beta_0$ such that for all $\beta > \beta_0$,
  \begin{equation}\label{Teq:2dimGibbs}
 \mu_t \left( c_1 \left( \frac{t}{\beta}\right)^{1/3} \le \diam(R_t) \le c_2 \left( \frac{t}{\beta}\right)^{1/3}  \right) \to 1 ,
  \end{equation}
 as $t \to \infty$.

 \item For all $d\ge 3$ we have for all $\beta_0$ and for all $\beta > \beta_0$,
 \begin{equation}
\mu_t \left( \diam (R_t) \ge c_1 \left(\frac{t}{\beta}\right)^{1/(d+1)}
 \qquad \text{ and } \qquad |R_t| \le c_2  \left(\frac{t}{\beta}\right)^{d/(d+1)} \right) \to 1
 \end{equation}
 as $t\to \infty$, where
 the constants $c_1$ and $c_2$ depend only on $d$ and $\beta_0$.
 \end{enumerate}
\end{thm}

In principle, the inverse temperature $\beta$ may even be chosen to depend on $t$, in which case the same result holds if we also assume $\beta = \beta(t)$ satisfies $\beta(t)/t \to 0$ as $t \to \infty$.

\medskip Note that the proof gives precise estimates on the probability of these events, as well as  estimates on the partition function $Z = Z(t,\beta)$. We refer the reader to Theorem \ref{T:2dimGibbs+} for a precise statement.
In dimensions $d\ge 3$, we conjecture that $\diam(R_t)$ scales like $(t/\beta)^{1/(d+1)}$, but we have only obtained a lower bound. Our proof supports this conjecture, but the upper bound is elusive (roughly because of topological complications in dimensions $\ge 3$). Nevertheless we still manage to get an upper bound on the volume which is consistent with this conjecture. This difficulty is a common feature of all works on Wulff crystal.

\medskip A related problem was studied by E. Bolthausen \cite{bolthausen} in dimension $d=2$. In that work, the energy $H(\omega)$ serving to define the Gibbs measure $\mu$ in \eqref{gibbs} is taken to be  $\hat H(\omega)  = |R_t(\omega)|$, the size of the range (as opposed to that of its boundary). Thus $d\hat \mu = \hat Z_t^{-1} \exp( - \beta\hat H(\omega) ) d \Pr$.
 Bolthausen's result is that the random walk condensates to a set of diameter $t^{1/4}$, which is close in the Hausdorff sense to a Euclidean ball of that diameter. In both problems, good bounds on the partition functions $Z_t$ and $\hat Z_t$ play a crucial role. In the case where the energy is just the volume,  we have $\hat Z_t = \E( \exp ( - \beta |R_t|))$, and precise asymptotics for this quantity were already obtained by Donsker and Varadhan \cite{DVrw}. This is a considerably easier problem than the one considered here, essentially because $|R_t(\omega)|$ is ``almost" a continuous function of its local time profile, viewed as a probability measure on $\Z^d$. In particular, the powerful machinery of large deviations theory provides the right tools to study that question. This goes a long way in explaining the appearance of the Euclidean ball as a limit shape, and explains why the inverse temperature $\beta$ is not a relevant parameter in that model.

\medskip  In contrast, here we believe that the limit shape depends on $\beta$ and is not a rotationally symmetric ball.
So the microscopic geometry of the lattice is important even to determine the macroscopic shape of the random walk cluster, and thus there is no hope in directly applying the Donsker--Varadhan large deviations machinery to the problem. A related major difficulty is that two local time profiles can be macroscopically close in the $\ell^1$ sense, say, even though the sizes of their boundaries are of widely different orders of magnitude.

\subsection{Some related problems}

Our technique is sufficiently robust that it yields similar results for a number of models which turn out to be quite closely related.
One interesting case is the following conditioning problem, initially suggested by Itai Benjamini in private communication with the first author (in fact, it was this question which was initially the focus of the present investigation). For $t\ge 0$, define the event $\cE_t$ as follows:
\begin{equation}
  \label{D:event}
  \cE_t = \{L(t,x) \ge \beta, \forall \ x \in R_t\}.
\end{equation}
Here $L(t,x) = \int_0^t 1_{\{ X_s = x \}} ds$ is the amount of time the walk spends at a vertex $x$.
Conditioning on this event gives a uniform lower bound on the density of local time uniformly over the range $R_t$. This also favours highly condensed configurations. In Benjamini's original question, it was assumed that $\beta =\beta(t) \to \infty$ as $t\to \infty$. The question was to decide whether, conditional on the event $\cE_t$, there is a shape theorem for the range $R_t$.

As we will see later on (see for instance \eqref{expcostEt}), the conditioning also heavily penalises shapes $R_t$ with large boundaries: essentially, every point on the boundary penalises the shape by a factor of order $e^{- \beta}$, and so a behaviour similar to Theorem \ref{T:2dimGibbs} may be expected. In particular, the conditioning is already highly nontrivial when $\beta(t) \equiv \beta$ is a fixed constant. This is perhaps counterintuitive initially, since in dimension $d=2$ for instance, typical points are visited logarithmically many times, so the constraint $\cE_t$ does not seem ``very" singular.

Unlike in Theorem \ref{T:2dimGibbs}, we will need an assumption that $\beta>\beta_0$, where $\beta_0$ is an explicit constant: $\beta_0 = \log \alpha$, where $\alpha$ is the connective constant of $\Z^2$.  (In dimension $d\ge 3$, that constant takes a different value related to a notion of self-avoiding surfaces, see \eqref{beta0} for the definition).

\begin{thm}\label{T:Cond2dim}
\begin{enumerate}
\item
 Let $d= 2$. Let $\beta_0 = \log \alpha$, where $\alpha$ is the connective constant. For all $\beta > \beta_1 > \beta_0$, we have
  \begin{equation}\label{Teq:Cond2dim}
  \Pr \left( \left. c_1  \left( \tfrac{t}{\beta}\right)^{1/3}  \le \diam (R_t) \le c_2 \left( \tfrac{t}{\beta}\right)^{1/3}  \right| \cE_t \right) \to 1
  \end{equation}
as $t \to \infty$, where the positive constants $c_1$ and $c_2$ depend only on $\beta_1$.

 \item For all $d\ge 2$, there exists $\beta_0 = \beta_0(d)$ such that for all $\beta>\beta_1 > \beta_0$,
 \begin{equation}
\Pr \left( \left. \diam (R_t) \ge c_1 \left(\tfrac{t}{\beta}\right)^{1/(d+1)} ; \text{ and } |R_t| \le c_2  \left(\tfrac{t}{\beta}\right)^{d/(d+1)} \right| \cE_t \right) \to 1
 \end{equation}
 as $t\to \infty$, where the positive constants $c_1$ and $c_2$ depend only on $d$ and $\beta_1$.
 \end{enumerate}
\end{thm}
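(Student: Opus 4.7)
The plan is to mirror the proof of Theorem~\ref{T:2dimGibbs}, using the heuristic recorded in the introduction that each site on $\p R_t$ costs a factor $e^{-\beta}$: this will let $\Pr(\cE_t)$ play the role of the partition function $Z$, with the conditioning on $\cE_t$ replacing the Gibbs weighting $e^{-\beta|\p R_t|}$. The precise form of the heuristic is a Feynman--Kac / thinning identity. For any finite connected $G\ni 0$, let $Y$ denote the continuous-time random walk on $G$ obtained from $X$ by deleting all jumps that would leave $G$ (so $Y$ has rate $1$ on each internal edge), and let $k_x$ be the number of neighbours of $x\in G$ outside $G$. Then
$$\Pr(R_t\subseteq G) \;=\; \E^Y\!\left[\exp\!\left(-\sum_{x\in\p G} k_x L_t^Y(x)\right)\right],$$
and on the event $\{L_t^Y(x)\ge\beta\ \forall x\in\p G\}$ the integrand is bounded by $e^{-\beta\sum k_x}\le e^{-\beta|\p G|}$. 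This gives the pointwise upper bound
$$\Pr(R_t=G,\ \cE_t)\;\le\;\Pr\!\left(R_t\subseteq G,\ L_t(x)\ge\beta\ \forall x\in\p G\right)\;\le\;e^{-\beta|\p G|}.$$

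Summing this over connected $G\ni 0$ and using the polyomino-type estimate $\#\{G\ni 0:|\p G|=P\}\le\alpha^P$ (where $\alpha$ is the connective constant of $\Z^d$, or the analogous self-avoiding-surface constant in $d\ge 3$), combined with the isoperimetric inequality $|\p G|\ge c|G|^{(d-1)/d}$, yields
$$\Pr(\cE_t,\ |R_t|\ge V)\;\le\;\sum_{P\ge cV^{(d-1)/d}}\alpha^P e^{-\beta P}\;\le\;Ce^{-(\beta-\beta_0)cV^{(d-1)/d}},$$
convergent exactly when $\beta>\beta_0=\log\alpha$, with constants uniform on $\beta\ge\beta_1>\beta_0$. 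In $d=2$ one has additionally the planar fact $|\p G|\ge c\,\diam G$ for connected $G$, giving $\Pr(\cE_t,\ \diam R_t\ge D)\le Ce^{-(\beta-\beta_0)cD}$. Combined with a matching lower bound
$$\Pr(\cE_t)\;\ge\;c\exp\!\left(-c'(\beta^2 t^{d-1})^{1/(d+1)}\right)$$
(which in $d=2$ reads $e^{-c'(\beta^2 t)^{1/3}}$), these yield the upper bounds on $\diam R_t$ (for $d=2$) and on $|R_t|$ (all $d$) claimed in the theorem. The conditional lower bound $\diam R_t\ge c_1(t/\beta)^{1/(d+1)}$ then follows from the standard union estimate $\Pr(\diam R_t\le D)\le D^d e^{-ct/D^2}$ coming from the Dirichlet eigenvalue on a box of side $D$: this is negligible compared to $\Pr(\cE_t)$ as soon as $D\ll(t/\beta)^{1/(d+1)}$.

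The main obstacle is the lower bound on $\Pr(\cE_t)$. The naive ``trapping'' strategy of conditioning on $\{R_t\subseteq B\}$ for a ball $B$ fails: Doob's $h$-transform yields a process whose local-time density is proportional to $\phi_1^2$, the squared Dirichlet eigenfunction, which is peaked in the interior of $B$ and vanishes near $\p B$; hence sites on $\p B$ receive local time much smaller than $\beta$ and $\cE_t$ is violated. The right approach, paralleling the lower bound on $Z$ in Theorem~\ref{T:2dimGibbs}, takes a ball $B$ of radius $R\sim(t/\beta)^{1/(d+1)}$ and works directly with the reflected walk $Y$ on $B$, which (unlike the $h$-transformed walk) has uniform stationary distribution and mixes in time $O(R^2)\ll t$, so its local times satisfy $L_t^Y(x)\asymp t/|B|\gg\beta$ uniformly and the analogue $\cE_t^Y$ is typical under $Y$. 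Controlling the Feynman--Kac integral $\E^Y[e^{-\sum k_x L_t^Y}\mathbf{1}_{\cE_t^Y}]$ then delivers the required rate $e^{-c\lambda_1(B)t}$. Once this delicate lower bound is in place, the remaining inputs---the polyomino counting, where the threshold $\beta_0$ enters, and standard Dirichlet eigenvalue estimates on $\Z^d$---are routine.
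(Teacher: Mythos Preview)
Your upper-bound machinery is essentially the paper's: the pointwise estimate $\Pr(R_t=G,\cE_t)\le e^{-\beta|\p G|}$ (which the paper derives via independent exponential clocks rather than your thinning identity, but the two are equivalent), the surface/loop counting that produces the threshold $\beta_0=\log\alpha$, the isoperimetric inequality, and the Dirichlet eigenvalue bound $\Pr(\diam R_t\le D)\le D^d e^{-ct/D^2}$ for the lower bound on the diameter. These pieces are fine.

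The genuine gap is your lower bound on $\Pr(\cE_t)$. Your reflected-walk argument does \emph{not} deliver the rate $e^{-c\lambda_1(B)t}\asymp e^{-ct/R^2}$. Under the reflected walk $Y$ on a box $B$ of side $R$, the stationary distribution is uniform and the typical local time at every vertex, including boundary vertices, is $t/|B|\asymp t/R^d$. Hence on the typical event $\cE_t^Y$ the Feynman--Kac integrand is
\[
\exp\Bigl(-\sum_{x\in\p B}k_xL_t^Y(x)\Bigr)\;\asymp\;\exp\Bigl(-c\,|\p_e B|\cdot\tfrac{t}{|B|}\Bigr)\;\asymp\;e^{-ct/R},
\]
not $e^{-ct/R^2}$. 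The identity $\E^Y[e^{-\sum k_xL_t^Y}]=\Pr(R_t\subseteq B)\asymp e^{-\lambda_1(B)t}$ is of course correct, but that full expectation is carried by \emph{atypical} $Y$-paths---precisely the $\phi_1$-Doob-transformed paths whose boundary local time is of order $t\phi_1^2/\|\phi_1\|_2^2\asymp t/R^{d+2}\ll\beta$, i.e.\ paths that \emph{violate} $\cE_t^Y$. Restricting to $\cE_t^Y$ therefore kills the main contribution, and your argument only yields $\Pr(\cE_t)\ge e^{-ct/R}$. Optimising over $R$ (subject to $t/R^d\ge\beta$) gives at best $e^{-c(t^{d-1}\beta)^{1/d}}$, which for $d=2$ is $e^{-c\sqrt{t\beta}}$ rather than the needed $e^{-c(t\beta^2)^{1/3}}$; plugging this into the rest of the proof pins the diameter only between $(t/\beta)^{1/4}$ and $(t/\beta)^{1/2}$.

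The paper resolves this by reusing the change of measure $\Q$ built for Theorem~\ref{T:2dimGibbs}: the carefully chosen profile $\pi$ (linear near $\p S$, so that $\pi\asymp 1/L^{d+1}$ there, and $\asymp 1/L^d$ in the bulk) is exactly what makes $d\Pr/d\Q\ge e^{-ct/L^2}$ on the good event $\cA$, while still guaranteeing that all bulk vertices receive local time $\ge\beta$ with $\Q$-probability bounded away from $0$. The remaining $O(L^{d-1})$ vertices in the first few boundary layers are then handled by a separate deterministic phase in which the walk visits each of them for time $\ge\beta$, at an additional cost $e^{-c\beta L^{d-1}}$ which matches $e^{-ct/L^2}$. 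In short, the correct lower bound genuinely requires the non-uniform profile $\pi$ developed in Section~2; the uniform (reflected) walk is too crude by a factor of $R$ in the exponent.
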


As in Theorem \ref{T:2dimGibbs}, the result remains valid if $\beta = \beta(t)$ is allowed to depend on $t$, provided also that $\beta(t) / t \to 0$ as $t \to \infty$.

Another variant consists in taking a slightly different Hamiltonian $\tilde H$, defined by
\begin{equation}
\tilde H(\omega) = \sum_{x \in \p R_t} L(t,x),\label{Hvariant}
\end{equation}
Thus the penalisation takes into account not only the size of the boundary, but also the amount of time spent on it.
 Define $d\tilde \mu_t = (\tilde Z)^{-1} \exp( - \beta \tilde H) d\Pr $ on $\cF $.
\begin{thm}\label{Tvariant}
Theorem \ref{T:2dimGibbs} still holds true with $\tilde \mu_t$ instead of $\mu_t$.
\end{thm}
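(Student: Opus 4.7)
The plan is to mirror the proof of Theorem~\ref{T:2dimGibbs}, modifying each step to accommodate the local-time weighting. The key observation is that, on the condensed configurations relevant for $\tilde\mu$, the local time is roughly equidistributed over the range, so that heuristically $\tilde H \approx |\p R_t| \cdot (t/|R_t|)$; this comparison between the two Hamiltonians is what allows the analysis for $\mu$ to carry over. As in the case of $\mu$, the argument proceeds in two steps: a lower bound on the partition function $\tilde Z$ via a favourable confinement strategy, and an upper bound on the diameter via an isoperimetric-type argument.

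First, I would establish the lower bound on $\tilde Z$. We restrict to walks that stay inside a box $D$ of side $\ell \asymp (t/\beta)^{1/3}$ and that cover $D$; on this event $R_t = D$, so $\tilde H = \int_0^t \mathbf{1}_{X_s \in \p D}\,ds$. The probability of confinement is bounded below by the Donsker--Varadhan estimate $\exp(-Ct/\ell^2)$, and given confinement the walk covers $D$ with probability tending to $1$ since the cover time is much smaller than $t$. A Feynman--Kac argument, or alternatively a Jensen-type estimate exploiting the fact that the equilibrium fraction of time spent on $\p D$ is of order $|\p D|/|D| \asymp 1/\ell$, yields $\E[e^{-\beta \tilde H}; X \subset D,\, R_t = D] \gtrsim \exp(-Ct/\ell^2 - C'\beta t/\ell)$. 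Optimising over $\ell$ reproduces the scale $\ell \asymp (t/\beta)^{1/3}$ from the proof of Theorem~\ref{T:2dimGibbs}.

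Second, I would adapt the upper bound on the diameter. The core step is a lower bound on $\tilde H$ on typical paths of the form $\tilde H \ge c\,|\p R_t| \cdot (t/|R_t|)$ with high probability. Combined with the discrete isoperimetric inequality $|\p R_t| \gtrsim |R_t|^{(d-1)/d}$ and the polymer-type enumeration of possible shapes of $R_t$ used in the proof of Theorem~\ref{T:2dimGibbs}, this gives an upper bound on the contribution to $\tilde Z$ coming from paths with diameter exceeding $c_2 (t/\beta)^{1/3}$ (or volume exceeding $c_2 (t/\beta)^{d/(d+1)}$ in higher dimensions) that is negligible relative to the lower bound on $\tilde Z$ obtained above.

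The hardest step I anticipate is proving the lower bound on $\tilde H$ with the correct scaling. Since $\tilde H$ measures local time spent on the \emph{final} range, one has to rule out the possibility that the walk ``avoids'' its own boundary by spending almost all its time in the interior of $R_t$. This concentration-type statement on $\sum_{x \in \p R_t} L_t(x)$ could be obtained via a second-moment computation using the Markov property, or by a Dynkin-type isomorphism relating the local-time profile to a Gaussian free field. Once this estimate is in place, the remainder of the argument follows the same structure as for $\mu$, with $H$ replaced everywhere by the comparable quantity $(|R_t|/t)\,\tilde H$.
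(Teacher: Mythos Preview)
Your overall plan---mirror the proof of Theorem~\ref{T:2dimGibbs} via a lower bound on $\tilde Z$ and an isoperimetric upper bound---is exactly what the paper does. However, your implementation of the lower bound on $\tilde Z$ does not work as stated.

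You estimate $\tilde H\asymp t/\ell$ using the \emph{uniform} equilibrium fraction $|\partial D|/|D|\asymp 1/\ell$, and then claim that optimising $Ct/\ell^2 + C'\beta t/\ell$ over $\ell$ reproduces $\ell\asymp (t/\beta)^{1/3}$. But this function is strictly decreasing in $\ell$: there is no finite optimiser. Even if you invoke the coverage constraint (which in $d=2$ forces $\ell\lesssim t^{1/2}$ up to logarithms), the resulting bound $\tilde Z\gtrsim \exp(-c\beta t^{1/2}\log t)$ is far too weak to match anything on the upper-bound side. The underlying problem is that under the uniform profile the walk really does spend a fraction $\asymp 1/\ell$ of its time on the boundary, so $\tilde H\asymp t/\ell$ is genuinely too large for the argument to close.

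The paper instead reuses the non-uniform profile $\pi$ already built in Section~2 for Proposition~\ref{P:lbZ}. The entire point of that profile is that boundary vertices carry mass $\mu_1\asymp L^{-(d+1)}$ rather than $L^{-d}$, so under $\Q$ the time spent on $\partial S$ satisfies $L(t,S_1)\asymp \pi(S_1)\,t\asymp t/L^2$, an order of $L$ smaller than what the uniform strategy gives. Concretely, the paper recycles verbatim the construction used to lower-bound $\Pr(\cE_t)$ (the good event $\cG'$ on $[0,t/2]$ followed by a controlled tour of the outer $K$ layers for time at most $2\beta$ per vertex), and notes in one line (equation~\eqref{tildeZlb}) that on this event $\tilde H$ is controlled and hence $\tilde Z\ge \exp(-c\,t^{1-2/(d+1)}\beta^{2/(d+1)})$. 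So the missing ingredient in your proposal is precisely the profile $\pi$; once it is in hand, no new work beyond that for $\mu$ is required.

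Your upper-bound route, via a concentration estimate $\tilde H\ge c\,|\partial R_t|\cdot t/|R_t|$, is also not what the paper does. The paper simply says that with \eqref{tildeZlb} and the definition of $\tilde\mu$ in hand the argument ``proceeds essentially as in Theorem~\ref{T:2dimGibbs+}''. Your proposed inequality is a reasonable heuristic but is not obviously true pathwise (the walk can touch each boundary vertex briefly and then retreat to the interior), and neither a second-moment computation nor a Dynkin isomorphism handles well a functional defined through the \emph{final} range $R_t$; in any case it is a substantially harder statement than anything the paper actually uses.
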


\paragraph{Acknowledgements.} This work started when AY was a Herschel Smith postdoctoral fellow in 2009--2010 at the Statistical Laboratory, University of Cambridge. We gratefully acknowledge the financial support of the Herschel Smith fund and EPSRC grant EP/GO55068/1 as well as EP/L018896/1 and EP/I03372X/1.
The first author is grateful for the hospitality of the Theory Group at Microsoft Research, where part of this work was carried out. He would also like to thank Omer Angel, Ori Gurel--Gurevitch, Yuval Peres and Ofer Zeitouni for useful conversations, and Tom Begley for the pictures. We are very grateful to two anonymous referees for their comments which improved the presentation and pointed out some mistakes in earlier versions of the paper.

\paragraph{Updates.} Since the first version was posted to the arXiv in 2013, there has been some progress on questions inspired by this paper. For instance, the series of works by Asselah and Schapira \cite{AS1, AS2} discusses a large deviation principle for the boundary of the range of a simple random walk in dimensions $d \ge 3$. In a different direction, a series of two articles by Biskup and Procaccia \cite{BP1, BP2} study a direct analogue of \eqref{gibbs} in the two-dimensional case, except that the  boundary of the range is understood to mean the edge boundary (whereas we consider here the vertex boundary) and the weight of the edges is also allowed to be random. Then by letting $t\to \infty$ and then $\beta \to \infty$ they are able to obtain a limit theorem for the shape of the range, which is nonrandom but depends on the law of the weights. In the particular case of deterministic (nonrandom) edge weights, this limit shape is simply the unit square. This is the analogue of our conjecture here (see Section \ref{S:opb}) that the limit shape is a diamond when $\beta \to \infty$. The difference between their square and our diamond comes from the difference between  edge and vertex boundary in the formulation of the problem in \cite{BP1, BP2}.

\subsection{Main ideas in the proof; organisation of the paper}

Since the proof of Theorem \ref{T:2dimGibbs} has many technical aspects, and involves plenty of careful computations,
let us provide a sketch of the main ideas involved.
Recall that we are trying to estimate the radius and volume
of the trace of the random walk penalised by its local time at
its boundary. A very rough heuristic for the size of
the diameter is as follows. The probability of staying in
a box $S$ of diameter $L$ is approximately
$\exp( - \text{const.} \times t / L^2)$, since the random walk has a probability of order 1 to escape $S$ every $L^2$ units of time. In this case, one can expect
the size of the boundary to then be approximately $L^{d-1}$,
so the corresponding energy of such a configuration is of
order $\exp ( - \beta L^{d-1})$. Balancing energy and
entropy gives us $\beta L^{d-1} = t / L^2$ and so
$L = (t / \beta)^{1/(d+1)}$, which is indeed the conjectured order of the diameter in all dimensions $d \ge 2$ (see Theorem \ref{T:2dimGibbs}).

\medskip The main issue in translating this rough heuristic to a rigorous argument is that the random walk could stay in a box of size $L$ while having a boundary much larger than $L^{d-1}$: this will be the case if the boundary is in some sense rough or fractal, which is \emph{a priori} the case at least in small dimensions (recall that in dimension $d=2$, the dimension of the outer boundary of Brownian motion is $4/3$). This raises serious questions about the heuristic argument above: could the probability of staying in a box of size $L$ \emph{and} have a smooth boundary be  substantially smaller than $\exp (- \text{const.} \times t / L^2)$?
Fortunately we answer by the negative. Correspondingly, our main task is to prove a lower bound on the partition function $Z$ (see Proposition \ref{P:lbZ}), which establishes one scenario of probability roughly $\exp ( - \text{const.} \times t / L^2)$ where the boundary of the range is of size approximately $L^{d-1}$. From this point of view, the most delicate case appears to be $d=2$; yet surprisingly this is where our results are also the most precise.

\medskip In order to do this, we first have to guess the profile of local times $(\pi(x))_{x \in S}$, in the box $S$ of diameter $L$,
achieved by a random walk conditioned to have a small boundary. The trickiest part is to guess the behaviour of this profile close (at micro- and mesoscopic distance) to the boundary of the box. We define a specific profile which with hindsight should be almost the optimal one.
We then have to compute the cost of achieving this profile, and show that the boundary has the desired size $O(L^{d-1})$ with this profile.

\medskip This leads us to a change of measure
argument, as done in \cite{bolthausen}, and we must estimate the Radon--Nikodym derivative under the tilted measure. The main term turns out to be $\exp (  \int_0^t \frac{\Delta f}{f}(X_s) ds )$, where $f(x) $ is the square root of the local time profile $\pi$ which we seek to impose (see Lemma \ref{L:PQ}). If the local times of $X$ are well approximated by the profile $\pi$ then it is relatively easy to conclude (using careful second-order Taylor expansions, see  Lemma \ref{lem:RN bound}) that this Radon--Nikodym derivative is indeed of order $\exp ( - \text{const.} \times t /L^2)$, as desired. Hence what is needed is a precise control of the large deviations of local time at points under the tilted measure. This is achieved by a careful analysis done in Lemma \ref{L:LTconc},
and our main use of it is summarised in Corollary \ref{cor:tail}. Roughly speaking, to obtain good large deviation control on the local time at a point $x$ which might be close to the boundary of $S$, it suffices to show that there is a positive chance to hit the point $x$, every $1/\pi(x)$ units of time. This is achieved through a quantitative analysis of the tilted measure, using electrical network theory, and is the main purpose of Section \ref{scn:quant est}. This is one of the most technical parts of the paper, and is particularly delicate in the case $d=2$ (reflecting the above mentioned difficulty).


\medskip Finally, in  Section \ref{S:proofs},
we apply the bound on the partition function bound to control the radius and volume of the
penalised random walk trace. This is done mainly using discrete isoperimetric inequalities.

\medskip Let us note that our methods work even for the {\em constant} $\beta$ regime
(not just $\beta(t) \to \infty$).
In order to achieve this, it was necessary to correctly pick an accurate local time profile $\pi$
for the lower bound on the partition function $Z(t,\beta)$.
The naive choice in this case (essentially the normalised squared principal eigenfunction) was not good enough for constant $\beta$ because of how it behaves near the boundary.
We discuss the required properties of this profile in the beginning of Section \ref{scn:quant est},
adjacent to the definition of $\pi$.
(For example, the polylogarithmic terms
appearing in the definition of $\pi$ are essential
for the analysis to work, but would be absent in the naive choice of $\pi$.)

\section{Quantitative estimates for lower bound on $Z$}
\label{scn:quant est}

\subsection{Change of measure}

As mentioned above, a main technical part of this paper consists in deriving good lower bounds on the partition function $Z$, which hold in all dimensions $d\ge 2$. In order to do this,
we introduce a change of measure which is key to our analysis. This is a relatively standard technique in large deviations (see, e.g., Bolthausen's article \cite{bolthausen}
as well as the work of G\"artner and den Hollander \cite{GH} on intermittency of parabolic Anderson model). However, the precise change of measure which needs to be performed here is much more delicate than usual. The analysis of the titled walk in particular will require a host of tools from the quantitative theory of Markov chains: we will need very precise information about how the tilted walk behaves at microscopic and mesoscopic distances away from the boundary $\partial S$.

Let $ \pi \in \cM_1(\Z^d)$ be a probability measure on $\Z^d$,
and define a law $\Q$ on $(\Omega, \cF)$ as the Markov chain on $\Z^d$ having the transition rates $Q(x,y) = \sqrt{\pi(y) / \pi(x)}$ for $x\sim y$ and $\pi(x)>0$ or, equivalently, infinitesimal generator defined by
\begin{equation}
Qf(x) = \sum_{y \sim x} \sqrt{\frac{\pi(y)}{\pi(x)}} [f(y) - f(x) ].
\end{equation}
It is immediate (but essential) that $\pi$ is a reversible equilibrium measure under $\Q$. We will also let $\Q_x$ denote the law of this Markov chain started from a given vertex $x \in \Z^d$.
The choice of $\pi$ will be crucial to our proof. Let $L$ be least integer greater than $(t/\beta)^{1/(d+1)}$, i.e., $L = {\lceil (t/\beta)^{1/(d+1)}\rceil}$. Let $S = [-L,L]^d \cap \Z^d$ be the cube of side length $2L$.
Our choice of $\pi$ is extremely delicate and is determined by the following requirements.

\begin{itemize}

\item $\pi$ must be chosen so that the walk never leaves the cube $S$, and should spend most of its time in the ``bulk" of the cube $S$, so $\pi(x) \asymp 1/L^d$ near the centre.

\item $\pi$ must be chosen so that by time $t$, points on the boundary $\partial S$ are visited, but typically only a finite (Poisson-like) number of times, so $\pi(x) \asymp 1/t$ near the boundary, i.e., $\pi(x) \asymp 1/ L^{d+1}$. At a finite but large distance from the boundary, the mean number of visits should still be finite but large.

\item $\pi$ must be a ``reasonably smooth" function near the boundary, so that achieving the profile $\pi$ is not too unlikely (we are aiming for  probability of order $\exp ( - c t /L^2)$, which is roughly the probability of staying in a cube of size $L$ for time $t$).

\end{itemize}

These three conditions would ensure that the boundary of the range is not much bigger than the boundary of the cube $S$, while the smoothness condition ensures that $\pi$ is not too unlikely. Recall in particular that $L$ was chosen so that the entropic cost, $\exp( - ct/L^2)$, balances the energy cost $\exp ( - c \beta L^{d-1})$.

In view of the above requirements it might be natural to take $\pi(x) \asymp \dist(x, \partial S) / L^{d+1}$, i.e., increases linearly with the distance to the boundary of $S$. While this clearly fulfils the first and second point, it turns out that the Dirichlet energy of $\sqrt{\pi}$ (which ends up governing how likely it is to achieve $\pi$) is too high by a logarithmic factor. Instead, the specific choice of $\pi$ is as follows. For $0\le r \le L $, let
$$S_r = \{z \in S \ : \ \dist(z, \p S) = r\} = \{ z \in S \ : \ \| z\|_\infty = L-r \},$$
where $\dist(\cdot, \cdot)$ refers to the graph distance on $\Z^d$ and for a point $z = (z_1, \ldots, z_d) \in \Z^d$, $\|z\|_\infty = \max_{1\le i \le d} |z_i|$. Then, for $x \in S_r$, set
\begin{equation} \label{eqn:2D trans prob}
 \pi(x) =C_{\eqref{eqn:2D trans prob}} \mu_r  \text{ where } \mu_r: =
 \begin{cases}
 \frac{r+1}{L^{d+1} (\log (r+2))^2} & \text{ if } r \le L/2\\
\left( \sqrt{\mu_{L/2}} +  \frac{(r - L/2)}{L^{(d+2)/2}}\right)^2 & \text{ if } r \ge L/2 .
\end{cases}
\end{equation}
Let $ \pi(z)=0$ for $z \not\in S$, and the constant $C_{\eqref{eqn:2D trans prob}}$ is chosen so that $\sum_z \pi(z) = 1$.
It can then be checked that $C_{\eqref{eqn:2D trans prob}}$ is uniformly bounded away from 0 and infinity and converges to a constant as $L$ tends to infinity.

We comment briefly on the choice of $\pi$. In view of large deviation theory and the Donsker--Varadhan principle, the most natural choice \emph{a priori} is to take $\pi$ to be the square of the first Dirichlet eigenfunction on $S$, normalised to have unit mass. This is for instance what is used in Bolthausen's work \cite{bolthausen} with some additional tweaking near the boundary of the shape (see the definition of $\tilde \psi$ on p.893 of \cite{bolthausen}). However this turns out to be ``too flat" near the boundary, making the second requirement untrue.

Our choice means that the growth of $\pi$ is much steeper near the boundary. The slightly sublinear growth of $\pi$ near the boundary, in $r/(\log r)^2$, is in fact the crucial feature of this choice: the linear factor $r$ guarantees that points at a large distance from the boundary have a large mean number of visits, while the correcting factor in $1/(\log r)^2$ ensures that $\pi$ is smooth enough that achieving a profile $\pi$ has a probability of the right order of magnitude.

\paragraph{Orientation.} At the technical level, we recall that our argument is organised as follows. Roughly speaking, we wish to obtain large deviation bounds on the local time accumulated at a point $y \in S$ under the tilted measure $\Q$ (Lemma \ref{L:LTconc}). The key for doing so will be to show that $y$ is hit sufficiently frequently, and in particular to obtain exponential tails on the hitting time of $y$ (Proposition \ref{P:exphit}, using electrical network theory). Once Lemma \ref{L:LTconc} is proved, we use the concentration of local time to estimate the Radon-Nikodym derivative of $\Pr$ with respect to $\Q$ (Lemma \ref{lem:RN bound}) and hence estimate the partition function $Z(t, \beta)$ (Proposition \ref{P:lbZ}).

\subsection{Crude estimate on mixing time}

\medskip Our first goal is to prove a crude bound on the mixing time of the Markov chain defined by $\Q_x$, which is needed at various points in our argument. We do this by estimating the spectral gap of the Markov chain, using the method of \emph{canonical paths} of Diaconis and Saloff-Coste. We use the standard canonical paths on $\Z^d$: that is, for $x, y \in S$, we define the path  $\gamma_{x,y}$ as follows.
We first try to match the first coordinate of $x$ and $y$, then the second coordinate, and so on
until the last coordinate. Each time, the change in coordinate is monotone. As an example
if $d = 2$ and $x = (x_1; x_2)$ and $y = (y_1; y_2)$, let  $z = (y_1; x_2)$. Then
$\gamma_{x,y}$ is the union
of two straight segments,
going horizontally from $x$ to $z$ and
then vertically from
 $z$ to $y$. We call $|\gamma|$ the length (number of edges) of a path $\gamma$. If $e = (x,y)$ is an edge, let $q(e) = \pi(x) Q(x,y)$ be the equilibrium flow through $e$.

\begin{lem}
\label{Poinc} Let $E$ denote the set of edges within $S$.
$$
B = \max_{e \in E} \left\{ \frac1{q(e)} \sum_{x,y: e \in \gamma_{x,y}} |\gamma_{x,y}| \pi(x) \pi(y)\right\}.
$$
Then $B  \le C_{\ref{Poinc}} L^2$ for some constant $C_{\ref{Poinc}}>0$.
\end{lem}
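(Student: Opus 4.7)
The plan is to upper-bound the summand of $B$ edge-by-edge by $CL^2$. Since every canonical path in $S$ changes each of $d$ coordinates monotonically by at most $2L$, one has $|\gamma_{xy}|\le 2dL$. So it suffices to show
\[\mathcal{A}(e):=\frac{1}{q(e)}\sum_{x,y:\,e\in\gamma_{xy}}\pi(x)\pi(y)\ \le\ C\,L\]
uniformly in $e$. Fix $e=(u,v)$ with $v=u+\mathbf{e}_k$. The coordinate-by-coordinate structure of the canonical paths means $e\in\gamma_{xy}$ iff $y_i=u_i$ for $i<k$, $x_i=u_i$ for $i>k$, $x_k\le u_k$ and $y_k\ge u_k+1$; the double sum therefore factors as $A(e)\cdot B(e)$, with $A(e)$ a sum of $\pi$ over a $k$-dimensional slab of $S$ and $B(e)$ over a $(d-k+1)$-dimensional slab.

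Each slab mass is estimated using only two facts about $\pi$: $(\mathrm{i})$ $\pi(z)\le C/L^d$ everywhere, and $(\mathrm{ii})$ $\pi(z)\lesssim r/L^{d+1}$ (up to log factors) whenever $r=\dist(z,\partial S)\le L/2$; together with the observation that fixing a coordinate $z_j=u_j$ inside a slab forces $\dist(z,\partial S)\le L-|u_j|$. Let $r_u=\dist(u,\partial S)$, attained at some coordinate $i^\star$. I split into two cases.

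Case~A ($i^\star=k$, i.e.\ $e$ is perpendicular to the nearest face of $S$): up to symmetry, $u_k=L-r_u$, so the constraint $y_k\ge u_k+1$ forces $L-|y_k|\le r_u-1$, hence $\dist(y,\partial S)\le r_u$; by $(\mathrm{ii})$, $\pi(y)\lesssim r_u/L^{d+1}$ on $r_u(2L+1)^{d-k}$ points, giving $B(e)\lesssim r_u^{\,2}/L^{k+1}$. The crude bound $A(e)\lesssim L^k\cdot L^{-d}=L^{k-d}$ from $(\mathrm{i})$ suffices, so $A(e)B(e)\lesssim r_u^{\,2}/L^{d+1}$. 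Combined with $q(e)=\sqrt{\pi(u)\pi(v)}\asymp r_u/L^{d+1}$, this yields $\mathcal{A}(e)\lesssim r_u\le L$. Case~B ($i^\star\ne k$, WLOG $i^\star>k$): now $x_{i^\star}=u_{i^\star}$ already forces $\dist(x,\partial S)\le r_u$ on every term of $A(e)$, so by $(\mathrm{ii})$, $A(e)\lesssim L^k\cdot r_u/L^{d+1}=r_u/L^{d+1-k}$. No improvement is available for $B(e)$; use $(\mathrm{i})$ to get $B(e)\lesssim L^{1-k}$. Because $v_{i^\star}=u_{i^\star}$, both endpoints of $e$ have depth $\asymp r_u$, so $q(e)\asymp r_u/L^{d+1}$ and again $\mathcal{A}(e)\lesssim L$. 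The case $i^\star<k$ is symmetric.

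The main obstacle is Case~A: without exploiting the constraint that $y_k\ge u_k+1$ pushes $y$ against $\partial S$ in direction $k$, the naive slab bounds would leave a spurious factor of $L$ and yield only $B=O(L^3)$. The $O(L^2)$ bound thus relies crucially on combining the directionality of the canonical paths with the near-linear growth of $\pi$ near $\partial S$ built into the construction, which is precisely why the profile $\pi$ was chosen with slope $\asymp r/L^{d+1}$ rather than, e.g., a Dirichlet-eigenfunction-type profile.
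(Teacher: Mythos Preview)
Your argument is correct and rests on the same key mechanism as the paper's: for any pair $(x,y)$ with $e\in\gamma_{xy}$, at least one of $x,y$ lies no farther from $\partial S$ than $e$ does, so its $\pi$-mass is $\le\mu_{r_u}\asymp q(e)$, and this is exactly where the near-linear growth of $\pi$ near $\partial S$ is used. The paper obtains this observation in one stroke from the containment $\gamma_{xy}\subset\prod_i[\min(x_i,y_i),\max(x_i,y_i)]$, which forces $\dist(z,\partial S)\ge\min\{\dist(x,\partial S),\dist(y,\partial S)\}$ for every $z$ on the path, and then finishes by bounding the number of relevant $x$ for each $y$ by $O(L)$.

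Your route differs in execution: you factor the double sum as $A(e)\cdot B(e)$ and split cases on which coordinate $i^\star$ realises $\dist(u,\partial S)$; the constraint $x_{i^\star}=u_{i^\star}$ (when $i^\star>k$), $y_{i^\star}=u_{i^\star}$ (when $i^\star<k$), or the half-line restriction on the $k$-th coordinate (when $i^\star=k$) then pins down which factor carries the small $\pi$-mass. This is more explicit than the paper's argument and in fact a bit more careful: the paper's assertion that ``for any $y$ there are at most $O(L)$ choices of $x$'' glosses over the $(2L+1)^{k-1}$ free coordinates of $x$ when $k>1$, whereas your factorised bookkeeping sidesteps that issue entirely. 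One small point: rather than writing $\pi(y)\lesssim r_u/L^{d+1}$ and $q(e)\asymp r_u/L^{d+1}$ separately (which loses a $(\log r_u)^2$ factor in the ratio), it is cleaner to use $\pi(y)\le\mu_{r_u}$ and $q(e)\ge c\,\mu_{r_u}$ directly, so that the log factors cancel exactly.
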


\begin{proof}
Fix an edge $e$ and suppose $\dist(e, \p S) = r$. Say that a point $x$ is below $e$ if $\dist(x, \p S)\le r$, and otherwise say that $x$ is above $e$. Note that if $e \in \gamma_{x,y}$, $x$ and $y$ cannot be both above $e$.
Indeed, if $m_i = \min \set{x_i,y_i}$ and $M_i = \max \set{x_i , y_i}$ then
$\gamma_{x,y} \subset \prod_i [m_i,M_i] \subset S$, and $x,y$ are two corners of this hypercube.
So any point on $\gamma_{x,y}$ must be further from $\p S$ than one of $x$ or $y$.

Therefore at least one of $x$ or $y$ is below $e$, say $x$. In this case $\pi(x) /q(e) \le O(1)$. Moreover it is elementary to check that the number of pairs of points $x, y \in S$ such that $e \in \gamma_{x,y}$ is at most $O( L^{d+1})$. Indeed, suppose that the two endpoints of $e$ differ only in the $i$th coordinate with $1\le i \le d$. Then the coordinates $1, \ldots, i$ of $x$ can be chosen arbitrarily among $O(L)$ possibilities (while the remaining coordinates are fixed and imposed by those of either endpoint of $e$). Conversely, the coordinates $i, \ldots, d$ can be chosen arbitrarily among $O(L)$ choices for $y$, and the remaining coordinates are fixed and imposed by those of either endpoint of $e$. Consequently, the total number of choices for $x$ and $y$ such that $e \in \gamma_{x,y}$ is at most $O(L^i) \times O( L^{d-i +1}) = O( L^{d+1})$.

Therefore, using the facts that $\pi(y) \le O(1/L^d)$ and $|\gamma_{x,y}| = O(L)$,
\begin{align*}
\frac1{q(e)} \sum_{x,y : e \in \gamma_{x,y}} |\gamma_{x,y}| \pi(x) \pi(y)
& \le C L  \frac1{L^{d}} \#\{ x,y \in S: e \in \gamma_{x,y}\}  \le CL^2
\end{align*}
as desired.
\end{proof}

By Theorem 3.2.1 in \cite{lsc}, it follows that if $\texttt{gap}$ is the spectral gap of the Markov chain, then $\texttt{gap} \ge 1/({C_{\ref{Poinc}}L^2})$. (In fact, that result holds for discrete time chains but it is straightforward to adapt the proof to the continuous time case).
Now, it is well known that estimates on the spectral gaps yield estimates on the heat kernel. More precisely,
$$
|\Q_x(X_t = y ) - \pi(y)| \le \sqrt{\pi(y) / \pi(x)} e^{ - \texttt{gap } t}.
$$
(See,  {\em e.g.}, the proof of Corollary 2.1.5 of \cite{lsc}). Let
$$
\tmix = \inf\left\{ t \ge 0 \ : \ \text{ for all } x,y \in S: \left|\frac{\Q_x(X_t = y ) }{ \pi(y) }-1 \right| \le 1/2 \right\}
$$
From Lemma \ref{Poinc} we deduce that for all $x,y \in S$,
$$
|\Q_x(X_t = y ) - \pi(y)| \le \sqrt{L} e^{ -  t /(C_{\ref{Poinc}} L^2)}.
$$
Since $\pi(y) \ge c/ L^{d+1}$ for all $y \in S$, it follows that taking $t \ge CL^2 \log L$ with some sufficiently large constant $C$,
$$
|\Q_x(X_t = y ) - \pi(y)| \le \tfrac12 \pi(y).
$$
Thus we have proved:
\begin{equation}\label{tmix}
\tmix \le C_{\ref{tmix}}L^2 \log L
\end{equation}

\subsection{Flows and hitting estimates}

In this section we start deriving a key estimate used in the proof, which gives exponential decay of the tail for the hitting time of an arbitrary point $y$ in $S$ (Proposition \ref{P:exphit} below). Recall that the main use of this result is to derive concentration of local time (Lemma \ref{L:LTconc}) which in turn gives us estimates on the Radon-Nikodym derivative of $\Pr$ with respect to $\Q$, and hence on the partition function $Z(t, \beta)$.

Throughout we will use the notation $T_y : = \inf\{ t \ge 0 \ : \ X_t = y\}$ for the first hitting time of a vertex $y$.

\begin{prop}
  \label{P:exphit}
 Uniformly over all $x ,y\in S$, for some positive constants $c_{\ref{P:exphit}},C_{\ref{P:exphit}}$ depending only on the dimension $d$,
  \begin{equation}\label{exphit}
  \Q_x[T_y > t] \le
  \exp\left( -  c_{\ref{P:exphit}} {t \pi(y)}\right)
  \end{equation}
for all $t \ge C_{\ref{P:exphit}}/ \pi(y)$,  if $d \ge 3$. For $d = 2$, we get
\begin{equation}\label{exphit2_strong}
  \Q_x[T_y > t] \le
  \exp\left( -   c_{\ref{P:exphit}}  \frac{t}{\kappa \log( r+2) }  \right)
\end{equation}
 for all $t \ge C_{\ref{P:exphit}} \kappa \log( r+2)$, where $\kappa = \pi(y)^{-1}  \vee L^2 \log L$ and $r = \dist(y, \p S)$.
 \end{prop}

\begin{remark} \label{rem:exphit2_weak}
Note that when $d = 2$, it is always the case that $(1/\kappa) = \pi(y) \wedge 1/( L^2 \log L)$ satisfies
$$\frac{1}{\kappa} \ge c \frac{\pi(y)}{\log (r+2)}
$$
(consider the cases $\dist(y, \p S) \leq L/2$ and $\dist(y, \p S) \geq L/2$ to see this).
Hence
$c_{\ref{P:exphit} }$ can be chosen so that for all $y \in S$,
\begin{equation} \label{exphit2_weak}
\Q_x[T_y > t] \le
\exp \left( -  c_{\ref{P:exphit}} \frac{t \pi(y)}{ (\log( r+2) )^2 } \right)
\end{equation}
with $r=\dist(y,\p S)$.
\end{remark}

We start the proof of Proposition \ref{P:exphit} with a lemma which bounds the local time accumulated at a vertex $y$ until hitting another vertex $x$.
We introduce a box $B_1$ of side-length $L/100$ at macroscopic distance (of order $L$) away from $\partial S$; for now we will take $B_1 = [-L/200, L/200]^d$ but later we will allow $B_1$ to be centered at a different point such as $(\lfloor L/2\rfloor , \ldots, \lfloor L/2\rfloor)$.
We take $B_2$ a box of side length $L/50$ and $B_3$ a box of side-length $L/10$
both concentric to $B_1$.

\begin{lem}
\label{L:LThit}
Assume that $x \in B_1$ and $y \in S\setminus B_3$.
There is a constant $C_{\ref{L:LThit}} = C_{\ref{L:LThit}}(d)>0$ depending only on the dimension $d$ such that
$$
\E_y [ L( T_x , y) ] \leq
\begin{cases}
C_{\ref{L:LThit}} & \text{ if } d \ge 3\\
C_{\ref{L:LThit}} \log (r+2) & \text{ if } d = 2,
\end{cases}
 $$
 where $r = \dist(y, \p S)$
\end{lem}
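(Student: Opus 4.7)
The plan is to recast the expected local time as an effective resistance via reversibility, and then to bound that resistance using Thomson's principle with an explicit radial flow.

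First, since the chain with rates $Q$ is reversible with equilibrium measure $\pi$ and corresponding edge conductances $c(a,b) = \pi(a)Q(a,b) = \sqrt{\pi(a)\pi(b)}$, decomposing the trajectory up to $T_x$ into excursions from $y$---whose number is geometric with success probability $\Pr_y(T_x < T_y^+) = 1/(\pi(y) q(y) \Reff(y, x))$, where $q(y) = \sum_z Q(y,z)$---yields the classical identity
\begin{equation*}
\E_y[L(T_x, y)] = \pi(y)\,\Reff(y, x).
\end{equation*}
Thus the lemma reduces to showing $\pi(y)\Reff(y,x) \le C$ in $d \ge 3$ and $\pi(y)\Reff(y,x) \le C\log r$ in $d=2$.

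To upper bound $\Reff(y,x)$ I would apply Thomson's principle and exhibit an explicit unit flow $\theta$ from $y$ to $x$ with controlled energy $\Ee(\theta) = \sum_e \theta(e)^2/c(e)$. The flow is designed using the shell decomposition $S = \bigsqcup_s S_s$: it moves radially inward from $y \in S_r$ while spreading laterally into a widening cone, so that on shell $S_{r+k}$ the flow occupies a lateral region of size $\asymp k^{d-1}$ (capped at the total shell size) and each crossing edge carries $\asymp k^{-(d-1)}$ units. Once the cone fills the shell $S_{L/2}$, we enter the bulk regime where conductances are essentially uniform of order $1/L^d$, and the remaining flow from $S_{L/2}$ to $x$ is completed using a standard flow on a $\Z^d$-box of side $L$, with energy contribution $O(L^d)$ in $d \ge 3$ and $O(L^d \log L)$ in $d=2$.

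The energy computation then splits into two regimes. When $r \ge L/2$ (so that $y$ is already in the bulk) only the bulk flow is needed, giving $\Reff(y,x) \le O(L^d)$ or $O(L^d \log L)$; multiplying by $\pi(y) \le O(1/L^d)$ yields $O(1)$ and $O(\log L) \le O(\log r)$ respectively. When $r < L/2$ the radial phase dominates: using $\mu_s \asymp s/(L^{d+1}(\log s)^2)$ for $s \le L/2$, the radial energy $\asymp \sum_{k\ge 1} k^{-(d-1)}/\sqrt{\mu_{r+k}\mu_{r+k+1}}$ evaluates to $\asymp L^{d+1}(\log r)^2/r$ in $d \ge 3$ and $\asymp L^3(\log r)^3/r$ in $d=2$; multiplying by $\pi(y) = \mu_r \asymp r/(L^{d+1}(\log r)^2)$ gives exactly $O(1)$ and $O(\log r)$ respectively. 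The main obstacle will be to realise the cone flow as an honest divergence-free unit flow (rather than in the averaged sense above) and to handle the transitional shell $s = L/2$ where $\mu_s$ switches form; the specific choice of $\pi$, with its $(\log s)^{-2}$ damping near $\partial S$ and quadratic interior growth, is tailored precisely so that these two energy balances produce exactly the $O(1)$ and $O(\log r)$ bounds claimed.
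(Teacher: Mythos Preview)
Your proposal is correct and follows essentially the same route as the paper: reduce $\E_y[L(T_x,y)]$ to $\pi(y)\,\Reff(y,x)$ via reversibility, then bound $\Reff(y,x)$ by Thomson's principle with a cone-shaped unit flow from $y$ into the bulk and from the bulk to $x$, carrying out the energy sum shell by shell. The paper's only additional ingredient is exactly the one you flag as the ``main obstacle'': it realises the honest divergence-free flow by the random-path construction (pick $U$ uniform on the cone's base $\Sigma_M$ and route along the segments $[y,U]\cup[U,x]$), which gives the $O(k^{-(d-1)})$ per-edge bound you use; the refined $d=2$ computation in the paper is then precisely your shell-by-shell sum $\sum_k k^{-1}/\mu_{r+k}$.
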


\begin{proof}
Let $C_y\subset S$ be the cone formed by $y$ and $B_1$. Let $\Sigma_m = \set{ z \in S \cap C_y \ : \ \dist(z,y) = m }$, where $\text{dist}$ is the graph distance. Let $M = \inf\{m: \Sigma_m \cap B_2 \neq \emptyset \}$, note that $M =O(L)$ uniformly in $y \notin B_3$.
Let $\tilde C_y = \cup_{m=1}^M \Sigma_m$. Then let $\tilde C_x$ be the cone formed by $x$ and $\Sigma_M$ (see Figure \ref{F:cone}), and let $C = \tilde C_x \cup \tilde C_y$.
\begin{figure}\begin{center}
\includegraphics{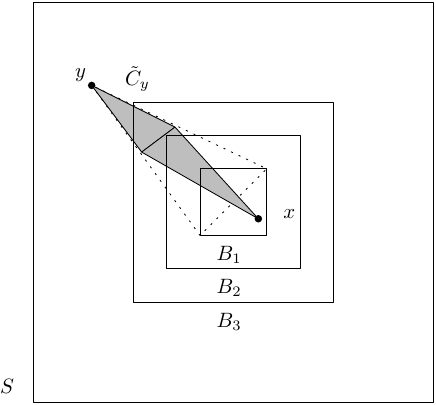}
\end{center}
\caption{The shaded area is $C$, the union of the two cones $\tilde C_y$  and $\tilde C_x$ in the proof of Lemma \ref{L:LThit}}
\label{F:cone}
\end{figure}

We will bound from below the probability that the Markov chain started from $y$ hits $x$ before returning to $y$.
Recall that our Markov chain with law $\Q$ is reversible with respect to $\pi$. Therefore it is equivalent to a discrete time random walk on a network
on $S$ where the weight, or conductance, of the edge $e = (z,w)$ is given by $\pi(z) Q(z,w) = \sqrt{ \pi(z) \pi(w)}$. It is equivalent in the sense that
both processes visit the same points in the same order, though possibly at different times.
Hence it suffices to bound from above the effective resistance $\Reff(y \to x)$ in this network.

We set all the weights on edges at distance greater than 1 from $C$ to be $0$, then
we have only reduced the conductance of all edges.
Rayleigh's monotonicity principle (see \cite[Chapter 2.4]{LyonsPeres})
tells us that the effective resistance only increases.
So it suffices to bound from above the effective resistance between $y$ and $x$
in this modified network.

The approach we use is that of \cite[Chapter 2]{LyonsPeres} (see, e.g., (2.17)).
Let $U$ be a random variable uniformly distributed on the base of the cone, $\Sigma_M$. Let $R$ be the union of the Euclidean segments $[y,U] $ and $[U,x]$.
Given $U$, let $\Gamma$ be some choice of a monotone path in $S$ that stays as close as possible from the two segments forming $R$, starting at $y$ and ending at $x$. By monotone we mean that each coordinate changes monotonically along $\Gamma$. ($\Gamma$ is thus a random monotone path connecting $x$ and $y$ through $\Sigma_M$, which stays at distance at most $\sqrt{d}$ from $R$; the exact way of choosing $\Gamma$ given $U$ does not matter).
Because $\Gamma$ is chosen to be monotone,
it traverses any edge at most once.
So the function $\theta(e) = \Pr [ e \in \Gamma ] - \Pr [ \hat{e} \in \Gamma ]$
(where $\hat{e}$ is the directed edge $e$
in the reverse direction) defines a unit flow in $C$, from $y$ to $x$. Indeed $\theta$ can be viewed as the expectation of a random variable which itself defines a unit flow almost surely. (Again, see
\cite[Chapters 2.4 \& 2.5]{LyonsPeres}, and especially (2.17)). Moreover, it is easily calculated that for an edge $e$ at distance $k$ from $y$ in $\tilde C_y$,
the probability that $e \in \Gamma$ is at most $O (k^{-(d-1)})$. Likewise, for an edge $e$ at distance $k$ from $x$ in $\tilde C_x$, the probability that $e$ is traversed by $\Gamma$ is at most $O(k^{-(d-1)})$.
Also, the number of edges in $\tilde C_y$ (resp.\ $\tilde C_x$) at distance $k$
from $y$ (resp.\ $x$) is at most $O( k^{d-1})$.

Thompson's principle (\cite[Chapter 2.4]{LyonsPeres})
then implies that the energy of this flow bounds the effective resistance. Noting that $\pi(z) \ge c \pi(y)$ for all $ z \in C$ we deduce
\begin{align*}
 \Reff(y \to x) &\leq \sum_{e} \mathrm{res}(e) \theta(e)^2  \\
 & \leq
\sum_{k =1}^M  \frac{1}{c \pi(y)} \cdot O(k^{-2 (d-1)} ) \cdot O(k^{d-1})  + \sum_{k =1}^{\diam(\tilde C_x)}  \frac{1}{c \pi(y)} \cdot O(k^{-2 (d-1)} ) \cdot O(k^{d-1}) \\
\\
& \leq
\begin{cases}
\frac{1}{c \pi(y)} \cdot O(\log L) & \text{ if } d=2\\
\frac1{c\pi(y)} & \text{ if } d \ge 3.
\end{cases}
\end{align*}
Thus, letting $w_y = \sum_{x \sim y } \sqrt{ \pi(x) \pi(y) }$,
\begin{align}
\label{eqn:escape from y}
\Q_y [ T_x < T_y ] & = \frac{1}{ w_y \Reff(y \to x) } \geq
\begin{cases}
 c ( \log L)^{-1}  & \text{ if } d=2\\
c & \text{ if } d \ge 3.
\end{cases}
\end{align}

In fact, when $ d = 2$ we can get a better bound by improving on the estimation of $\mathrm{res}(e)$ used above. Consider first the edges $e \in \tilde C_y$, and assume $y \in S_{r_0}$. It is obvious that for $r \ge r_0 +1$, $|S_r \cap \tilde C_y| = O(r-r_0)$.  Also, for each edge with at least one end in $S_r$, the probability that $e \in \Gamma$ is at most $O(r-r_0)^{-1}$.
Hence, if $r_0 \le L/2$,
\begin{align*}
\sum_{e \in \tilde C_y}  \mathrm{res}(e) \theta(e)^2 & \le  c \sum_{r=r_0+1 }^L \frac1{\mu_r}  \frac1{r-r_0} \\
& \le c L^3 \sum_{r=r_0+1}^{L/2} \frac{( \log (r+2) )^2}{r(r-r_0)} + c \sum_{r = L/2}^L \frac1{\mu_{L/2}( L/10)} \\
& \le c L^3 \frac{( \log (r_0+2))^3 }{r_0+2} + c L^2 ( \log L)^2 = \frac{c }{\mu_{r_0}} \log (r_0+2) + c L^2 ( \log L)^2 \\
& \le \frac{c }{\mu_{r_0}} \log (r_0+2) .
\end{align*}
In the second line above we have used that
\begin{align*}
\sum_{r=r_0+1}^{\infty} & \frac{ (\log (r+2) )^2 }{ r (r-r_0) }
\leq \frac{ 4 (\log (r_0+2) )^2}{ r_0+1 } \cdot \sum_{r=r_0+1}^{2 (r_0+1)} \tfrac{1}{r-r_0} +
\sum_{r>2(r_0+1)} \frac{2 (\log (r+2) )^2 }{ r^2 } .
\end{align*}
It is immediate that this conclusion also holds if $r_0 \ge L/2$. As for the edges in $\tilde C_x$, note that
\begin{align*}
\sum_{e \in \tilde C_x}  \mathrm{res}(e) \theta(e)^2 & \le  c \sum_{k=1 }^{L/100} \frac1{\mu_{99L/100}}  \frac1k
\le c L^2 \log L
\end{align*}
and note that this, too, is less or equal to $ c / \mu_{r_0}$. Therefore, we deduce that
$$
\Reff(y\to x) \le (C/ \mu_{r_0}) \log (r_0+2).
$$
Consequently,
\begin{align}
\label{eqn:escape from y2}
\Q_y [ T_x < T_y ] & = \frac{1}{ w_y \Reff(y \to x) } \geq  \frac{ c }{\log (r_0+2)}
\end{align}
for all $d\ge 2$.

The result follows easily by the strong Markov property and the fact that at each subsequent visit to $y$, the accumulated local time is an exponential random variable with rate bounded away from $0$ and thus has bounded mean.
\end{proof}

Let $ \tau_{xy} =  \inf\{ t \ge 0: \E_x[L(t,y) ] \ge 1\}$, and let $\tau = \max_{x \in B_1} \tau_{xy}$.  We immediately deduce from the above:

\begin{lem}\label{L:tau} Uniformly over $x \in B_1$ and $y \notin B_3$, we have
$$
\Q_x[T_y < \tau] \ge
 \begin{cases}
\frac{c}{ \log (r+2) } & \text{ if } d =2 \\
c & \text{ if } d\ge 3,
\end{cases}
$$
where $r = \dist(y, \p S).$
\end{lem}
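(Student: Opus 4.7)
Since $\tau = \max_{x' \in B_1} \tau_{x'y} \ge \tau_{xy}$, it suffices to prove the stronger statement that $\Q_x(T_y < \tau_{xy}) \ge c/\log r$ when $d=2$ and $\Q_x(T_y < \tau_{xy}) \ge c$ when $d \ge 3$. The map $t \mapsto \E_x[L(t,y)]$ is continuous (dominated convergence), so by definition of $\tau_{xy}$ we have the equality $\E_x[L(\tau_{xy}, y)] = 1$.

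The first step is a standard first-moment argument exploiting the strong Markov property at $T_y$. Since $L(s,y)$ vanishes for $s < T_y$, conditioning on $T_y$ gives
\begin{equation*}
1 = \E_x[L(\tau_{xy}, y)] = \E_x\!\left[ \1{T_y \le \tau_{xy}} \,\E_y[L(\tau_{xy} - T_y, y)] \right] \le \Q_x(T_y \le \tau_{xy}) \cdot \E_y[L(\tau_{xy}, y)],
\end{equation*}
where we used the monotonicity of $s \mapsto \E_y[L(s,y)]$. Thus the problem reduces to establishing the upper bound $\E_y[L(\tau_{xy}, y)] \le C \log r$ (resp.\ $\le C$).

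To control this expectation we decompose the local time at the random time $T_x \wedge \tau_{xy}$:
\begin{equation*}
L(\tau_{xy}, y) \le L(T_x, y) + \1{T_x \le \tau_{xy}} \bigl( L(\tau_{xy}, y) - L(T_x, y) \bigr).
\end{equation*}
Taking expectation under $\Q_y$, the first summand is exactly the quantity bounded in Lemma \ref{L:LThit}, yielding $C \log r$ when $d=2$ and $C$ when $d \ge 3$. For the second summand, the strong Markov property at $T_x$ identifies the remaining local time at $y$ with a copy of $L(\tau_{xy} - T_x, y)$ started from $x$, whose expectation is at most $\E_x[L(\tau_{xy}, y)] = 1$. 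Summing the two contributions gives $\E_y[L(\tau_{xy}, y)] \le C \log r + 1$ (resp.\ $C + 1$), which inserted above finishes the proof.

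The argument is short and mostly formal; the main substantive input is Lemma \ref{L:LThit}, and the only mild subtlety is juggling the deterministic horizon $\tau_{xy}$ with the two strong Markov decompositions at $T_y$ and $T_x$. No further geometric or spectral information is needed at this stage.
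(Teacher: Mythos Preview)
Your proof is correct and follows essentially the same approach as the paper: both reduce from $\tau$ to $\tau_{xy}$, use the identity $\E_x[L(\tau_{xy},y)]=1$, apply the strong Markov property at $T_y$ to reduce to bounding $\E_y[L(\tau_{xy},y)]$, and then split this expectation at $T_x$ into the contribution handled by Lemma~\ref{L:LThit} plus a remainder bounded by $\E_x[L(\tau_{xy},y)]=1$. The only cosmetic difference is that the paper phrases the first step as $\Q_x(L(\tau_{xy},y)>0)=\E_x[L(\tau_{xy},y)]/\E_x[L(\tau_{xy},y)\mid L(\tau_{xy},y)>0]$, which is equivalent to your inequality.
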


\begin{proof}
It suffices to prove this with $\tau$ replaced by $\tau_{xy}$ since $\tau \ge \tau_{xy}$. Now, observe that
$$
\Q_x[T_y < \tau_{xy}] = \Q_x[L(\tau_{xy}, y)>0] = \frac{\E_x[ L(\tau_{xy}, y)]}{ \E_x[ L(\tau_{xy}, y) \ | \ L(\tau_{xy} , y) >0]}.
$$
Now, by definition,
$
\E_x[ L(\tau_{xy}, y)] = 1.
$
On the other hand, by the strong Markov property,
\begin{align*}
\E_x[ L(\tau_{xy}, y) \ | \ L(\tau_{xy} , y) >0]
& \le   \E_y[ L ( \tau_{xy}, y)]
\le \E_y[ L( T_x, y)] + \E_x[ L(\tau_{xy}, y )]  \\
& \le
\begin{cases}
C \log (r+2) + 1 & \text{ if } d =2 \\
C+1, & \text{ if } d\ge 3.
\end{cases}
\end{align*}
where $C= C_{\ref{L:LThit}}$ is the constant from Lemma \ref{L:LThit}. Thus, $\Q_x[T_y < \tau] \ge 1/ (C+1)$ if $d\ge 3$,
and $\Q_x[T_y < \tau] \ge 1/ ( C \log (r+2)+1)$ for $d =2$, as desired.
\end{proof}

We are now able to complete the proof of Proposition \ref{P:exphit}.

\begin{proof}[Proof of Proposition \ref{P:exphit}] Suppose first that $y \notin B_3$, and let $x$ be arbitrary in $S$.
By \eqref{tmix}, we know that $\tmix \le C_{\ref{tmix}}L^2 \log L$. It follows that, uniformly over $x \in S$, if $t = C_{\ref{tmix}} L^2 \log L$,
$$
\Q_x[ X_t \in B_1] \ge c.
$$
Define a sequence of times $t_1, t_2, \ldots$ by setting $t_n = n(C_{\ref{tmix}} L^2 \log L + \tau).$ Then uniformly over $x \in S$ and $y\notin B_3$, we obtain by Lemma \ref{L:tau} and the Markov property at time $t$,
$$
\Q_x [ T_y > t_1 ] \le 1-h,
$$
where $h = c$ if $d\ge 3$ and $h = c/\log (r+2)$ if $d = 2$. Hence, since this estimate is uniform in $x \in S$, we deduce by applying the Markov property at times $t_1, \ldots, t_n$,
$$
\Q_x[ T_y > t_n] \le (1-h)^n\le \exp ( - nh).
$$
Observe now that for $d\ge3$, $\tau \le c/\pi(y)$ for some $c$ large enough.
Indeed, $\pi(y) \le C/ L^{d}$ so if $t = c/\pi(y) $ with $c$ sufficiently large, then $t\ge 2 \tmix$ (see \eqref{tmix}). Hence we have that
$$
\E_x[L(t, y)] \ge \int_{t/2}^t \Q_x[X_s = y] ds \ge \tfrac{t}{2} \cdot \tfrac{\pi(y)}{2}  \ge 1,
$$
and hence it follows that for all $x \in B_1$ (and indeed all $x \in S$), $\tau_{xy} \le t$. Taking the maximum over $ x\in B_1$, we obtain as desired $\tau \le c/\pi(y)$. Observe further that, still in the case $ d\ge 3$, we have that $L^2 \log L \le C  /\pi(y)$ hence $t_1\le C /\pi(y)$ as well.

On the other hand, if $d = 2$, then the same argument gives $\tau \le C (L^2 \log L +1/\pi(y))$ for some $C>0$ large enough, so that we have $t_1 \le C \kappa$, where $\kappa = 1/ \pi(y) + L^2 \log L$.
Hence for $n = \lfloor c \pi(y) t\rfloor$ ($d\ge 3$) or $n = \lfloor c t/\kappa \rfloor$ ($d=2$),
$$
\Q_x( T_y > t ) \le \Q_x( T_y > t_n ) \le
\begin{cases}
 \exp ( - h t \pi(y)) & \text{ if } d \ge 3\\
 \exp ( - h t / \kappa) & \text{ if } d =2,
 \end{cases}
$$
as soon as $t\ge C/\pi(y)$ (for $d\ge 3$) or $t \ge C/\kappa$ ($d=2$) so that $n \ge 1$.

This immediately implies the result of Proposition \ref{P:exphit} if $y\notin B_3$. But the restriction $y\notin B_3$ is not essential.
Indeed if $y \in B_3$, we can always consider a disjoint cube $\tilde B_3$, also of side length $L/100$, and at macroscopic distance (of order $L$) away from $\partial S$, for instance the one centered at $(\lfloor L/2 \rfloor, \ldots, \lfloor L/2 \rfloor)$.
Throughout this box it will also be the case that $\pi(x) \ge c / L^{d}$ and so the exact same calculations apply,
yielding a similar conclusion for all $y\notin \tilde B_3$.
Since a given $y$ is either in $S\setminus B_3$ or in $S\setminus \tilde B_3$
(as $B_3$ and $\tilde B_3$ are disjoint), Proposition \ref{P:exphit} follows.
\end{proof}

\subsection{Tail estimates for local time}

We now turn to Lemma \ref{L:LTconc}, which proves concentration of the local time at an arbitrary point $y \in S$, for which the key input is the exponential tails derived in Proposition \ref{P:exphit}. We will then state a corollary summarising our main use of Lemma \ref{L:LTconc}.

\begin{lem} \label{L:LTconc}
There exist constants $C_{\ref{L:LTconc}},c_{\ref{L:LTconc}} >0$ depending only on $d$, such that
the following holds.
\begin{enumerate}
\item Assume $d\ge 3$. Uniformly in $x,y \in S$, for any $\delta > 2 / ( \pi(y) q_y t)$,
\begin{equation} \label{upperconc}
\Q_x \big( L(t,y) \geq (1+\delta) \pi(y) t \big)
 \leq
C_{\ref{L:LTconc}} \exp \big(- c_{\ref{L:LTconc}} ( \sqrt{\delta} \wedge \delta^2) \pi(y) t \big) .
\end{equation}

\item Assume $d=2$. Uniformly in $x,y \in S$, for any $\delta > 2/( \pi(y) q_y t)$,
\begin{equation}
\Q_x \big( L(t,y) \geq (1+\delta) \pi(y) t \big) \le C_{\ref{L:LTconc}} \exp \big(
- c_{\ref{L:LTconc}} (\sqrt{\delta}\wedge \delta^2) \frac{\pi(y) t}{ ( \log (r+2))^2} \big)
\end{equation}
where $r = \dist(y, \p S)$.
\end{enumerate}
\end{lem}

\begin{proof}
Fix $y \in S$. In this proof it is convenient to define a time $\theta$ by putting
\begin{equation}\label{Delta}
  \theta = \begin{cases}
\frac{ c_{\ref{P:exphit}} }{\pi(y) } (\log (r+2) )^2
& \text{ if } d =2  \\
\frac{c_{\ref{P:exphit}}}{\pi(y)} & \text{ if } d \ge 3.
\end{cases}
\end{equation}
By Proposition \ref{P:exphit} (and Remark \ref{rem:exphit2_weak}),
$y$ is hit with positive probability every $\theta$ units of time.

Let $q_y = \sum_{x} Q(y,x)$ be the total jump rate from $y$ under $\Q$.
Note that $q_y$ is of constant order for $L$ sufficiently large.
Fix $\eps>0$ (in a way which will depend on $\delta$ and will be specified below), and let $n = \lceil \pi(y) q_y t  (1+\eps) \rceil$.

It will be useful to define
$$ T = \inf \set{ t \geq 0 \ : \ X_t =y } \qquad
T^+ = \inf \set{ t \geq T_{S \setminus \set{y} }  \ : \ X_t = y } - T_{S \setminus \{y\} } , $$
which are the hitting and return time to $y$,
and also the successive return times to $y$:
$T_0=T$, and for $k>0$,
$$ \tilde T_k = \inf \set{ t \geq T_{k-1} \ : \ X_t \neq y }
\qquad
T_k = \inf \set{ t \geq \tilde T_k \ : \ X_t = y } . $$

Note that $T_n$ is the sum of the independent increments
\begin{align} \label{eqn:Tn}
T_n & = T_0 + \sum_{j=1}^n T_j - T_{j-1}
= T_0 + \sum_{j=1}^n T_j - \tilde T_j + \sum_{j=1}^n \tilde T_j - T_{j-1}
.
\end{align}
For each $j$, the increments $T_j - \tilde T_j$ have the same law.
Also, the second sum
$$ \sum_{j=1}^n \tilde T_j - T_{j-1} $$
is just $L(\tilde T_n , y) = L(T_n,y)$, each increment having the law of an independent exponential
random variable of rate $q_y$.

{\bf Step I.} First, we bound $\Q_x (T_n \leq t)$ by bounding
the first sum $\sum_{j=1}^n T_j - \tilde T_j$.
Note that $T_j - \tilde T_j$ has the distribution of $T^+$ under $\Q_y$.
Thus,
by Proposition \ref{P:exphit} and Remark \ref{rem:exphit2_weak} (for the $d=2$ case),
$T_j - \tilde T_j$ has an exponential tail
$\Q_x ( T_j - \tilde T_j > t ) = \Q_y (T^+ > t ) \leq e^{- \theta^{-1} t}$
for $t \ge C_{\ref{P:exphit} }\theta$,
and
\begin{align*} \E_x [ (T_j - \tilde T_j)^2 ] &=\E_y [ (T^+)^2] =  \int_0^\infty 2t\Pr_y[T^+ >t] dt\\
&\leq
\int_0^{C_{\ref{P:exphit} }\theta} 2t dt + \int_{C_{\ref{P:exphit} }\theta}^\infty 2t e^{ - t /\theta} dt
\\
& \leq A : =
( (C_{\ref{P:exphit} } )^2 + 2 ) \cdot \theta^2 .
\end{align*}
Also, it is well known that $\E_y[T^+] = \tfrac{1}{q_y \pi(y) }$, so
$$ \sum_{j=1}^n \E_x [T_j - \tilde T_j] = n \E_y[T^+] \geq t(1+\eps) $$
by our choice of $n$.
Using the inequalities $e^{-\xi} \leq 1 - \xi + \xi^2$, valid for for $\xi>0$,
and $1 + \xi \leq e^\xi$, valid for any $\xi \in \R$,
we deduce that for any $\alpha>0$,
\begin{align}
\E_x [ e^{-\alpha T_n } ] & \leq \big( \E_y [ e^{-\alpha T^+ } ] \big)^n
\leq \big( 1 - \alpha \E_y [T^+ ] + \alpha^2 A  \big)^n
\leq \exp \big( - \alpha n  \E_y[T^+] +  n \alpha^2 A  \big) .
\label{eq:upperconc}
\end{align}
Since
$\Q_x ( T_n \leq t )
\leq \Q_x \big( e^{-\alpha T_n} \geq e^{-\alpha t} \big)$,
we have
\begin{align*}
\Q_x ( T_n \leq t )
& \leq \exp \big( \alpha^2 n A + \alpha (t - n \E_y[T^+] ) \big) ,
\end{align*}
which we may optimise over $\alpha >0$.
We find that the right hand side is minimised for $\alpha = \frac{ n \E_y[T^+] -t}{2 n A}$ (note that $\alpha \ge t \eps/ (2nA) >0 $).
Substituting, this implies
\begin{align}
\label{eqn:first step}
\Q_x ( T_n \leq t ) & \leq \exp \Big( - \frac{ (n \E_y[T^+] - t )^2  }{ 4 n A } \Big) \leq \exp \Big( - \frac{ \eps^2 t^2   }{ 4 n A } \Big)
 \nonumber \\
&\leq \exp \Big(  -  \frac{\eps^2  }{ 4 (2 + (C_{\ref{P:exphit} } )^2 ) }
\cdot \frac{t^2}{ n  \theta^2 }   \Big) .
\end{align}

{\bf Step II.}  Next, we bound $\Q_x (T_n > t , L(t,y) \geq \pi(y) t (1+\delta) )$
by bounding the second sum $\sum_{j=1}^n \tilde T_j - T_{j-1}$ in \eqref{eqn:Tn}.

Note that
$$ \sum_{j=1}^{n} (\tilde T_j - T_{j-1}) = L(\tilde T_n , y) = L(T_n,y) $$
has the distribution of $\sum_{j=1}^n E_j$ where
$(E_j)_j$ are i.i.d.\ exponential random variables of rate $q_y$.
Standard concentration bounds on sums of i.i.d.\ exponential random variables show
that for any $\eta>0$,
\begin{align*}
\Pr \big(\sum_{k=1}^n E_k \geq  n \tfrac{1+\eta}{q_y} \big) & \leq \exp \big( - n ( \eta- \log(1+\eta) )  \big) .
\end{align*}
Thus, if $\eps>0$ satisfies
\begin{equation}\label{constraint}
\pi(y) q_y t (1+\delta) \geq n (1+\eps)
\end{equation}
then
\begin{align}
\label{eqn:second step}
\Q_x ( L(T_n,y) \geq \pi(y) t (1+\delta) ) & \leq
\Q_x ( L(T_n,y) \geq n \tfrac{1+\eps}{q_y} ) \leq
\exp \big( - n ( \eps -  \log(1+\eps) )  \big) .
\end{align}
Finally,
we have that the event $\{ L(t,y) \geq \pi(y) t (1+\delta) \}$
implies that either $T_n \leq t$ or $L(T_n,y) \geq L(t,y) \geq \pi(y) t (1+\delta) \geq n (1+\eps)/q_y$,
still assuming that $\eps$ satisfies the constraint \eqref{constraint}.
Since $n \le \pi(y) q_y t(1+ \eps) + 1$ and $\delta  \ge 2 / (\pi(y) q_y t)$ by assumption on $\delta$ in the theorem, this is the case as soon as
$$ (2\eps + \eps^2) \leq \frac{\delta /2}{1 + 1/(\pi(y) q_y t)} $$
Note that ${\pi(y) q_y t} \ge  \beta_0 C_{\eqref{eqn:2D trans prob}} /2$ as $q_y \to 1$ uniformly and $t \ge \beta L^{d+1}$.
Hence we can choose
$$ \eps \asymp \min ( \delta, \sqrt{\delta}) $$
so that \eqref{constraint} is satisfied, where the implied constants depend on $\beta_0$ and $d$ only.
Combining \eqref{eqn:first step} and \eqref{eqn:second step} we arrive at the conclusion of the lemma,
since
\begin{align}
 \Q_x \big( L(t,y) \geq \pi(y) t (1+\delta) \big) & \leq \Q_x \big( T_n \leq t \big)
+ \Q_x \big( L(T_n,y) \geq \pi(y) t (1+\delta) \big) \nonumber \\
& \leq \exp \Big( - c \tfrac{ \eps^2}{1+\eps} (\log (r+2) )^{-2}   \pi(y) t \Big)  + \exp \Big( - c  \eps^2 \pi(y) t \Big)\label{eq:largedev}\\
& \leq 2 \exp \Big( - c ( \eps^2\wedge \eps)  \cdot (\log (r+2) )^{-2} \cdot  \pi(y) t \Big) ,\nonumber
\end{align}
where the $(\log (r+2) )^{-2}$ term can be removed when $d \geq 3$ and $c>0$
is a constant depending on $ C_{\eqref{eqn:2D trans prob}}$, $C_{\ref{P:exphit} } , c_{\ref{P:exphit} }$, $\beta_0$ and $d$.
(We have used \eqref{eqn:second step} to get \eqref{eq:largedev} and the fact that
$(\eps - \log(1+\eps) )\asymp \eps^2 \wedge \eps $ as well as $n \le (1+ \eps) \pi(y) t$.)
The lemma now follows since $\eps \asymp \min ( \delta, \sqrt{\delta})$, so that $\eps^2 \wedge \eps \asymp \min ( \sqrt{\delta}, \delta, \delta^2) \asymp \min (\sqrt{\delta}, \delta^2)$.
\end{proof}

\begin{remark}\label{R:LTconc_low}
  A similar statement to Lemma \ref{L:LTconc} holds with the upper bound replaced by a lower bound:
The proof is essentially similar with a few additional complications because we can no longer use the simple bound $e^{-\xi}\le 1- \xi + \xi^2$ which was valid for all $\xi\ge 0$, but when $\xi \le 0$ is only valid for $-1 \le \xi \le 0$ (see \eqref{eq:upperconc}). However, in order to not overload the paper with technical details, and since this isn't needed for the proof of Theorem \ref{T:2dimGibbs} we have chosen not to include the proof.
\end{remark}

\begin{cor}
\label{cor:tail}
For all $\beta_0>0$ there exists a constant $c_{\ref{cor:tail}} > 0$ depending only on $d$ and $\beta_0$, such that
the following holds.  For any integer $k > 0$ and any $x,y \in S$,
\begin{equation*}
\Q_x \Big( L(t,y) \geq 2^k \pi(y) t \Big)
 \leq
2 \exp \Big( - c_{\ref{cor:tail}} 2^{k/2} \cdot \beta_0 r ( \log (r+2) )^{-4}   \Big) ,
\end{equation*}
where
$r = \dist(y , \p S)$.
\end{cor}

\begin{proof}
This just follows from taking $1+\delta = 2^k$ in
Lemma \ref{L:LTconc},
where we also use the facts that $q_y \to 1$ uniformly over $S$, and that $t= \beta L^{d+1}$ so that $\pi(y) t \geq C_{ \eqref{eqn:2D trans prob}} \beta_0 r ( \log(r+2) )^{-2}$.
\end{proof}

\section{Proof of Theorem \ref{T:2dimGibbs}}

The goal of this section is to obtain the following lower bound on the partition function.
\begin{prop}\label{P:lbZ} Let $\beta_0>0$ be fixed and let $\beta > \beta_0$. Then
$$
Z(t,\beta) \ge
\exp\left(- \gamma t^{1- 2/(d+1)} \beta^{2 / (d+1)} \right)
$$
 where $\gamma$ is a constant depending only on $\beta_0$ and $d\ge 2$.
\end{prop}

\subsection{Good event}

For any $0 < r \leq L$ recall the definition of $S_r$:
$$ S_r = \{z \in S \ : \ \dist(z, \p S) = r\} = \{ z \in S \ : \ \| z\|_\infty = L-r \} . $$
For $z \in S_r$ let $\IP{z} = \# \{ 1 \leq j \leq d \ : \ |z_j| = \| z \|_\infty \}$
(which is between $1$ and $d$).
Define
\begin{equation}
\label{Drdef} D_r = \{ z \in S_r \ : \ \IP{z} > 1 \} .
 \end{equation}
In two dimensions the vertices of $D_r$ are exactly the four corners of the square $S_r$, while in three dimensions these are the edges of the cube defined by $S_r$. More generally the vertices of $D_r$ are those which are in the intersections of faces of the hypercube defined by $S_r$.

For any $k \ge 1$ define the (random) subset
$$\cX_k = \{ x \in S \ : \  \frac{L(t,x) }{ t \pi(x)} \in [2^k, 2^{k+1})\} , $$
and consider the set of vertices
$$
S_{r,k} = S_r \cap \cX_k \qquad D_{r,k} = D_r \cap \cX_k.
$$
Define the ``good'' events:
$$
\cS_{r,k} = \{ |S_{r,k}| \le |S_r| \exp( - c_{\ref{L:A}} 2^{k/2} r (\log (r+2))^{-4}) \}
$$
where $c_{\ref{L:A}} = \tfrac12 c_{\ref{cor:tail}}$. Define:
$$
\cS_r = \bigcap_{k \ge k_{\ref{L:A}}} \cS_{r,k} \qquad  \cS = \bigcap_{r=1}^L \cS_r
$$
where $k_{\ref{L:A}}$ will be chosen below, large enough. Likewise, define
$$
\cD_{r,k} = \{ |D_{r,k}| \le |D_r| \exp( - c_{\ref{L:A}} 2^{k/2} r (\log (r+2))^{-4})\}
$$
where $c_{\ref{L:A}} =  \tfrac12 c_{\ref{cor:tail}}$,
and
$$
\cD_r = \bigcap_{k \ge k_{\ref{L:A}}} \cD_{r,k} \qquad  \cD = \bigcap_{r=1}^L \cD_r
$$
as above.

Fixing some $c_{\eqref{cBoundary}}$ large enough (which will be chosen later)
we define the event
\begin{align}\label{cBoundary}
\cB &:= \{|\p R_t| \le
c_{\eqref{cBoundary}} L^{d-1}
 \} ,
\end{align}
and finally, we define the good event $\cG$ as follows:
\begin{equation}\label{G}
  \cG =
\cB \cap \cS  \cap \cD.
\end{equation}

We will now proceed to show that the probability of the good event $\Q_x(\cG)$ is bounded below uniformly in $L$. 
We will allow the starting point to be any fixed arbitrary $x\in S$ (although we only require these results with $x = 0$).

\begin{lem}\label{L:A}
Fix $\eps>0$ and $\beta_0>0$. We can choose $k_{\ref{L:A}} = k_{\ref{L:A}}(\beta_0,\eps)$ such that for
any $\beta \geq \beta_0$,  
and for all $t$ sufficiently large,
we have $\Q_x(\cS) \ge 1-\eps $ and $\Q_x(\cD) \ge 1-\eps$ for all $x \in S$.
\end{lem}

\begin{proof}
  We only show the proof for $\cS$, as the proof for $\cD$ is very similar.  By Corollary \ref{cor:tail}, taking expectation under $\Q_x$,
  \begin{align*}
  \E_x |S_{k,r} |  & \le |S_r| \max_{y \in S_r} \Q_x (y \in \cX_k)
  \le |S_r| \max_{y \in S_r}\Q_x (L(t,y) \ge 2^k \pi(y) t ) \\
  & \le |S_r|  \exp \big( -  2c_{\ref{L:A}} 2^{k/2} r (\log (r+2))^{-4} \big).
  \end{align*}
Applying a union bound and Markov's inequality, we deduce that
 $$
 \Q_x (\cS^c) \le \sum_{k \ge k_{\ref{L:A}}} \sum_{r \ge 1}
 \exp \big( -  c_{\ref{L:A}}  2^{k/2} r  (\log (r+2))^{-4} \big)
 $$
 and so can be made arbitrarily small by choosing $k_{\ref{L:A}}$ large enough
 (depending only on $\beta_0$), as desired.
\end{proof}

Now, we estimate $\E_x  |\p R_t|$ under $\Q_x$.

\begin{lem}\label{L:ER}
Let $\beta_0 > 0$.  There exists
$C_{\ref{L:ER}}> 0$ (depending only on $\beta_0$)
such that for all $\beta \geq \beta_0$,
under $\Q_x$ we have
$\E_x |\p R_t |  \le
C_{\ref{L:ER}} L^{d-1}$.
\end{lem}

\begin{proof}
Since the maximal degree is $2d$ we obtain that $|\p R_t| \leq 2d |S \setminus R_t| + CL^{d-1}$, where the second term represents all vertices on $\p S$.
Note that if $y \in S_r$ then $\pi(y) t \geq c_{\ref{P:exphit}} \beta r (\log (r+2) )^{-2}$ for some constant $c>0$.
Using Proposition \ref{P:exphit} (and Remark \ref{rem:exphit2_weak}),
\begin{align*}
\E_x  | S \setminus R_t |  & = \sum_{y \in S} \Q_x [ y \not\in R_t ] = \sum_{y \in S} \Q_x [ T_y > t ]
\leq \sum_r |S_r|
\exp \sr{ - c_{\ref{P:exphit}} \beta_0 r (\log (r+2) )^{-4} } \leq C L^{d-1} ,
\end{align*}
as desired.
\end{proof}

We deduce from Lemma \ref{L:ER} and Markov's inequality that
$$
\Q_x\left(|  \p R_t | \ge
2C_{\ref{L:ER}} L^{d-1} \right) \le 1/2.
$$
In particular, together with Lemma \ref{L:A}, if we take $c_{\eqref{cBoundary}} \ge 2 C_{\ref{L:ER}}$
(so altogether $c_{\eqref{cBoundary}}$ is chosen large enough in a way which depends only on $\beta_0$ and $d$), we obtain for $L$ sufficiently large
\begin{equation}
\label{eq:G} \Q_x( \cG) \ge 1/4.
\end{equation}

\subsection{Radon-Nikodym derivative estimates}
The following lemma is well known but very useful, see e.g. \cite{RogersWilliams}, IV, (22.8). We include it for completeness.
\begin{lem}\label{L:PQ}
Let $f(z) = \sqrt{\pi(z)}$. Let $\Delta f (x) = \sum_{y \sim x} f(y) - f(x)$ be the discrete Laplacian.
Then
$$
 \frac{d\Pr_x}{d\Q_x}|_{\cF_t} = \frac{f(X_0)}{f(X_t)} \exp\left( \int_0^t \frac{\Delta f}{f}(X_s) ds\right).
$$
\end{lem}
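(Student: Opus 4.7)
The plan is to recognize this as a standard Girsanov-type Radon--Nikodym formula for continuous-time Markov chains on path space, and simply identify the factors in our specific setting. I will follow the cylinder-set (likelihood) route: express the density of a sample path under each measure as a product of jump rates times an exponential of the integrated exit rate, then take the ratio.

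First I recall the general fact. If $\mathsf{P}$ and $\mathsf{Q}$ are two continuous-time Markov chains with jump rates $P(x,y),Q(x,y)$ (and both nonzero on the same edges), and if a path $\omega$ restricted to $[0,t]$ has jumps at times $0<t_1<\cdots<t_n<t$ through states $X_0=x_0,x_1,\ldots,x_n=X_t$, then
\begin{equation*}
\frac{d\mathsf{P}_x}{d\mathsf{Q}_x}\Big|_{\cF_t}(\omega) \;=\; \prod_{i=1}^{n}\frac{P(x_{i-1},x_i)}{Q(x_{i-1},x_i)} \,\exp\!\left(-\int_0^t \bigl(P(X_s)-Q(X_s)\bigr)\,ds\right),
\end{equation*}
where $P(x)=\sum_{y\neq x}P(x,y)$ is the total exit rate (and likewise $Q(x)$). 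This is the classical formula coming from the fact that the joint density of holding times and jump destinations factorises in this way; it can be proved by first writing the density on cylinders and then invoking a monotone class argument.

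Next I specialise to our setting. Under $\Pr_x$ (simple random walk at rate $1$), $P(x,y)=\mathbf 1_{x\sim y}$ and $P(x)=2d$. Under $\Q_x$, we have $Q(x,y)=f(y)/f(x)$ for $x\sim y$, so
\begin{equation*}
Q(x) \;=\; \sum_{y\sim x}\frac{f(y)}{f(x)} \;=\; 2d+\frac{\Delta f(x)}{f(x)},
\end{equation*}
using the defining identity $\Delta f(x)=\sum_{y\sim x}f(y)-2d\,f(x)$. Therefore $P(X_s)-Q(X_s)=-\Delta f(X_s)/f(X_s)$, which will yield the exponential factor in the stated form.

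The product of ratios telescopes cleanly, which is the key algebraic observation:
\begin{equation*}
\prod_{i=1}^{n}\frac{P(x_{i-1},x_i)}{Q(x_{i-1},x_i)}
\;=\;\prod_{i=1}^{n}\frac{f(x_{i-1})}{f(x_i)}
\;=\;\frac{f(x_0)}{f(x_n)}
\;=\;\frac{f(X_0)}{f(X_t)},
\end{equation*}
since $X_t=x_n$ (there is no jump in $(t_n,t]$). Combining the two factors gives exactly the claimed expression. There is no serious obstacle: one only needs to ensure the path remains in $\{\pi>0\}=S$ so the ratios are well defined, which is automatic $\Q_x$-a.s.\ for $x\in S$, and on complementary paths both measures agree on the trivial part and the identity is vacuous.
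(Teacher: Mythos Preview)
Your proposal is correct and takes essentially the same approach as the paper: both compute the path-space likelihood ratio by writing the density of a trajectory under each measure (product of jump rates times exponential of integrated exit rate), observe that the product of $P/Q$ ratios telescopes to $f(X_0)/f(X_t)$, and identify the integrated exit-rate difference as $\Delta f/f$. The paper additionally remarks that the formula can be obtained from a discrete Feynman--Kac representation, but its written-out argument is the same direct computation you give.
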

\begin{proof}
This follows easily from a discrete Feynman--Kac representation (see e.g. Lemma 11 in \cite{GH}). An alternative elementary proof is as follows.
Suppose the successive states visited by $\omega$ up to time $t$ are $x_0, \ldots, x_n$, with the path staying a time $\tau_0, \ldots, \tau_n$ at respectively at these locations. (Hence $\tau_1 + \ldots + \tau_n = t$.)  If $x \in \Z^d$, then the total rate at which the particle would jump out of $x$ under $\Q$ is given by $q(x) = f(x)^{-1} \sum_{y \sim x} f(y)$. Then  letting $d(x) = 2d$ be the total rate of leaving $x$ under $\Pr$,
  \begin{align*}
  \frac{d\Pr_x}{d\Q_x}( \omega) & = \frac{e^{- d(x_0) \tau_0} \ldots e^{- d(x_n) \tau_n}}{Q(x_0, x_1) e^{- q(x_0) \tau_0} \ldots Q(x_{n-1}, x_n) e^{- q(x_n) \tau_n}}\\
  & = \frac{f(x_0)}{f(x_n)} \prod_{i=0}^n \exp \big( (q -d)(x_i) \tau_i \big)
  = \frac{f(x_0)}{f(x_n)} \prod_{x \in \Z^d} \exp \big( (q-d)(x) L(t,x) \big) \\
  & = \frac{f(x_0)}{f(x_n)} \exp \left( \sum_{x \in \Z^d} \frac{\Delta f(x)}{f(x)} L(t,x)\right).
  \end{align*}
The result follows immediately.
\end{proof}

\begin{lem} \label{lem:RN bound}
Recall the events $\cS, \cD$ defined above \eqref{G}.
On the event $\cS \cap \cD$ we have
$$ \int_0^t \frac{ \Delta f}{f} (X_s) ds \geq - c_{\ref{lem:RN bound}} t L^{-2} , $$
where $c_{\ref{lem:RN bound}}  > 0$ is some constant (depending only on the dimension $d$ and on $\beta_0$).
\end{lem}

\begin{proof}
To ease the presentation, write $\mu(r) = \mu_r$, and consider $\mu$ as a function on real positive numbers. We want to estimate $\sum_x \tfrac{\Delta f(x)}{f(x)} L(t,x)$ from below. The terms $x \in S_r \setminus D_r$, $r \neq L/2$ are the ``main terms" and all the other terms ($ r= L/2$ or $x \in D_r$) are a kind of error which we need to estimate.

\medskip \noindent \textbf{Step 1: contribution of main terms.}
We will show that
\begin{equation}\label{Srcontr}
\sum_{r \neq L/2} \sum_{x \in S_r \setminus D_r}  \tfrac{\Delta f(x)}{f(x)} L(t,x) \ge - c \cdot \tfrac{t}{L^2}
\end{equation}

Note that $\Delta f(x) / f(x)$ does not change if we mutiply $f$ by a nonzero constant. Hence for this calculation we may take $C_{\eqref{eqn:2D trans prob}} = 1$ in the definition of $\pi(x)$. Thus we have for $x \in S_r$
$$ f(x) = \sqrt{\mu(r)} =
\begin{cases}
L^{-(d+1)/2} \frac{\sqrt{r} }{\log (r+2)} & \textrm{ if } r \leq L/2 \\
\sqrt{ \mu(L/2) } +  \tfrac{r-L/2}{ L^{(d+2)/2} } & \textrm{ if } r > L/2 .\\
\end{cases}
$$
%
%
A second order Taylor expansion provides the following estimate for all $x \in S_r, y \in S_{r+1}$
with $r+1 \leq L/2$:
\begin{align*}
\tfrac{f(y)}{f(x)} - 1 & = \tfrac1{2r} - \tfrac{1}{(r+2) \log(r+2)} + O(r^{-2}) .
\end{align*}
Now, for $x \in S_r \setminus D_r$ with $r+1 \leq L/2$,
all neighbours are in $S_r$ (and hence do no contribute to the Laplacian) except for one in $S_{r+1}$ and one in $S_{r-1}$.
Hence, for some $\xi \in [r-1,r]$,
\begin{align*}
\tfrac{\Delta f (x)}{f(x)} & = \tfrac1{2r} - \tfrac{1}{(r+2) \log(r+2)} - \tfrac1{2(r-1)} + \tfrac{1}{(r+1) \log(r+1)} + O(r^{-2}) \\
& = -\tfrac{1}{2 \xi^2} + \tfrac{\log(\xi+2) + 1}{ (\xi+2)^2 ( \log (\xi+2))^2 } + O(r^{-2} ) \geq - c r^{-2} ,
\end{align*}
for some constant $c>0$. For $r > L/2$, we have
that if $x \in S_r \setminus D_r$, then
$\Delta f(x)   = 0$ since $f$ is affine in this range.
Hence let us estimate the contribution to the Radon--Nikodym derivative \eqref{Srcontr}
coming from points in $S_r\setminus D_r$.
Denote
$$ \vphi(r) =
\begin{cases}
r (\log (r+2) )^{-2}  & \textrm{ if } r \leq L/2 , \\
r & \textrm{ if } r > L/2 .
\end{cases}
$$
If $x \in S_{r,k} \setminus D_r$ we have that $\tfrac{ \Delta f(x) }{f(x) } \geq - c r^{-2}$
and also
$L(t,x) \leq 2^{k+1} \pi(x) t \leq c 2^{k+1} (t/L^{d+1}) \vphi(r)$.
So, on the event $\cS_{r,k}$,
\begin{align*}
\sum_{x \in S_{r,k} \setminus D_r } & \tfrac{ \Delta f(x) }{ f(x) } L(t,x) \geq
-c |S_{r,k} \setminus D_r| \cdot 2^{k+1} \tfrac{t}{L^{d+1} }\vphi(r) r^{-2}
\\ &
\geq -c |S_r| \cdot 2^{k+1}  \tfrac{t}{L^{d+1}}  \vphi(r) r^{-2}  \cdot
\exp \big( - c_{\ref{L:A}} 2^{k/2} r (\log (r+2) )^{-4}     \big) .
\end{align*}
Summing over $k$ and since $|S_r| \le L^{d-1}$, we obtain that on the event $\cS$,
\begin{align}
\sum_{x \in S_r \setminus D_r}  \tfrac{ \Delta f(x) }{ f(x) } L(t,x)
 &\geq -tL^{-2} \cdot  \vphi(r) r^{-2}  \cdot
\Big( 
2^{k_{\ref{L:A} } +1} +
\sum_{k \geq k_{\ref{L:A}} } 2^{k+1} e^{ - c_{\ref{L:A}} 2^{k/2} r (\log (r+2) )^{-4}   } \Big) \nonumber\\
&
\geq - c t L^{-2}  \vphi(r) r^{-2} ,\nonumber
\end{align}
where the final constant $c>0$ depends on $k_{\ref{L:A}}$. Hence, summing over $r \neq L/2$, the contribution of the main terms to the Radon--Nikodym derivative is
\begin{equation*}
\sum_{r \neq L/2}\sum_{x \in S_r  \setminus{ D_r}} \tfrac{\Delta f(x) }{ f(x) } L(t,x) \ge - c t L^{-2}   
\end{equation*}
as desired in \eqref{Srcontr}, because
\begin{equation}
\label{justifprofile}
\sum_{r=1}^L \vphi(r) r^{-2} \leq
\sum_{1\leq r \leq L/2} \tfrac{1}{r (\log (r+2))^2 } + \sum_{L/2 < r \leq L} \tfrac1r \leq C .
\end{equation}

\medskip \noindent \textbf{Step 2: contribution of $\cD_r, r < L/2$.} 
If $x \in D_r$ for $r < L/2$, then $2d - \ip{x}$ neighbours of $x$ are in $S_r$, and $\ip{x}$ neighbours are in $S_{r-1}$ (here recall that $\ip{x} = \# \set{ 1\leq j \leq d \ : \ |x_j| = ||x||_\infty }$
is the number of coordinates which achieve the sup norm of $x$, as defined in \eqref{Drdef}).
So,
\begin{align*}
\tfrac{\Delta f(x) }{f(x)} & = -\tfrac{\ip{x} }{2(r-1)} + \tfrac{\ip{x} }{(r+1) \log(r+1)} + O(r^{-2}) \geq -d \cdot c (r+2)^{-1} .
\end{align*}
Hence noting that $|D_r| = O( L^{d-2})$, on the event $\cD$ the contribution to \eqref{Srcontr} coming from $ D_r, r< L/2,$ is:
\begin{align}
\sum_{x \in D_r}  \tfrac{ \Delta f(x) }{ f(x) } L(t,x) & \geq
-c |D_r| \cdot \tfrac{t}{L^{d+1}} \vphi(r) r^{-1} \cdot
\Big( 
2^{k_{\ref{L:A} } +1} +
\sum_{k \geq k_{\ref{L:A}} }
2^{k+1} e^{ - c_{\ref{L:A}}  2^{k/2} r (\log (r+2) )^{-4}   } \Big)
\nonumber \\
& \geq - c L^{d-2} \cdot \tfrac{t}{L^{d+1}} \vphi(r) r^{-1} \geq -c t L^{-3} \vphi(r) r^{-1}  \nonumber 
\end{align}
and thus summing over $r< L/2$ we get
\begin{equation}
\label{Drcontr<}
\sum_{r < L/2}\sum_{x \in D_r}  \tfrac{ \Delta f(x) }{ f(x) } L(t,x)  \ge - c \tfrac{t}{L^2 ( \log L)^2}
\end{equation}

\medskip \noindent \textbf{Step 3: contribution of $\cD_r$ with $r \ge L/2$.}
If $x \in D_r$ for $r \ge L/2$ then
\begin{align*}
\tfrac{\Delta f(x)}{f(x)} 
&= - \ip{x} \cdot \frac{1}{ L^{(d+2)/2 } f(x) }  \ge - c \frac1{L^{d/2 +1} f(x)} .
\end{align*}
%
Thus for $r \ge L/2$ and $k \ge 1$ we have, on $\cD_{r,k}$:
\begin{align*}
\sum_{x \in D_{r,k}} \tfrac{\Delta f(x)}{f(x)} L(t,x) & \ge - \tfrac{1/L^{d/2+1}}{f(x)} 2^{k+1} t f(x)^2  \times |D_r|  e^{ - c_{\ref{L:A}} 2^{k/2} r (\log (r+2) )^{-4}   } \\
& \ge - \tfrac{t}{L^{d/2 +1 }} L^{d-2}  \big( \sqrt{\mu_{L/2} } + \tfrac{r - L/2}{L^{d/2 +1 }} \big) \times  e^{ - c_{\ref{L:A}} 2^{k/2} r (\log (r+2) )^{-4}   }
\end{align*}
so that summing over $k$, on $\cD$, reasoning as above,
\begin{align*}
\sum_{x \in D_{r}} \tfrac{\Delta f(x)}{f(x)} L(t,x) & \ge  - c  \tfrac{t}{L^{d/2 +1 }} L^{d-2}  \big(  \tfrac1{L^{d/2} \log L} + \tfrac{r - L/2}{L^{d/2 +1 }} \big)
\end{align*}
and then summing over $r \ge L/2$:
\begin{align}
\sum_{r \ge L/2} \sum_{x \in D_{r}} \tfrac{\Delta f(x)}{f(x)} L(t,x) \ge - c ( \tfrac{t}{L^2 \log L} +  \tfrac{t}{L^2}) \ge - c \tfrac{t}{L^2}.  \label{Drcontr>}
\end{align}

\medskip \noindent \textbf{Step 4.} The contribution coming from $r= L/2$ is estimated in a similar way: for any $x \in S_{L/2}$,
$$
\tfrac{\Delta f(x)}{f(x)} \ge - c \tfrac{\log L}{L}
$$
and hence for the same reason as above, on the good event $\cS$,
\begin{align}
\sum_{x \in S_{L/2}} \tfrac{\Delta f(x)}{f(x)} L(t,x) & \ge - c L^{d-1} \tfrac{\log L}{L} \times t \tfrac{1}{L^d ( \log L)^2} \nonumber \\
& \ge - c \tfrac{t}{L^2 \log L}\label{contrmiddle}
\end{align}

\medskip \noindent \textbf{Conclusion.} Combining the results of all four steps above (\eqref{Srcontr}, \eqref{Drcontr<}, \eqref{Drcontr>}, and \eqref{contrmiddle}),
we deduce
$$
\sum_{x \in S} \tfrac{\Delta f(x)}{f(x)} L(t,x) \ge - c \tfrac{t}{L^2}.
$$
This concludes the proof of Lemma \ref{lem:RN bound}.
\end{proof}

\begin{remark}
Note that it is in \eqref{justifprofile} that we see the importance of the logarithmic correction terms in the choice of the local time profile $ \pi(x)$ in \eqref{eqn:2D trans prob}.
\end{remark}

With this lemma it is now easy to conclude the proof of the lower bound on the partition function.

\begin{proof}[Proof of Proposition \ref{P:lbZ}]
Let $d\ge 2$, and let $x = 0$ be the starting point of the walk.
Using the definition of $\cG$, Lemma \ref{lem:RN bound}, and \eqref{eq:G}, and the fact that $\pi(x) \le \pi(0) $ for any $x \in S$, we obtain:
\begin{align*}
Z(t,\beta) & = \E_0 [ \exp ( - \beta |\p R_t|)]  \ge \E_0 [ 1_{\cG} \exp ( - \beta |\p R_t|) ] \\
& \ge \exp ( -\beta c_{\eqref{cBoundary}}  L^{d-1}  ) \Pr_0( \cG)  = \exp ( -\beta c_{\eqref{cBoundary}}   L^{d-1}  )  \Q_0( 1_{\cG} \frac{d\Pr_0}{d\Q_0} )\\
& \ge \tfrac14  \exp ( -    (\beta  c_{\eqref{cBoundary}} L^{d-1} +  c_{\ref{lem:RN bound}}  t L^{-2}) ) .
\end{align*}
Recall that our choice of $L  = ( t  /\beta)^{1/(d+1)}$ guarantees that both terms
$\beta  L^{d-1}$ and $ t  L^{-2}$ in the exponential are of the same order of magnitude, namely
$ t^{1- 2/(d+1)} \beta^{2 / (d+1)}$.
This finishes the proof of Proposition \ref{P:lbZ} for a sufficiently large $\gamma$ (depending only on $\beta_0$ and the dimension $d$).
\end{proof}

\subsection{Discrete isoperimetry}

We now state and prove a modified isoperimetric inequality which deals with the outer boundary of a set.
We first need some definitions. For a set $G \subset \Z^d$, let $\text{Ext}(G)$ be the unique unbounded connected component of $\Z^d \setminus G$. Let the outer vertex boundary $\p ^* G$ be defined by
$$
\p^*G = \{ x \in G \ : \ \exists y \in \text{Ext}(G) \ , \ x \sim y\}.
$$
The outer edge boundary, denoted by $\p^*_e G$, consists of those edges $e = (x,y)$ with $x \in G$ and $ y \in \text{Ext}(G)$.

\begin{lem}
\label{lem:isoperimetric}
Let $A \subset \mathbb{Z}^d$ be a finite, connected set with $|A| \ge 2$.

\begin{enumerate}
\item Assume $d=2$, and let $R$ be the smallest rectangle in $\Z^2$ containing $A$ (i.e., $R$ is the intersection of all rectangles containing $A$).
Then,
\begin{equation}\label{iso2}
 |\p^* R| \leq 3 |\p^* A|.
 \end{equation}

\item For any $d \ge 2$,
\begin{equation}\label{iso3}
|\p^* A| \ge \frac{2d}{2d-1} |A|^{\frac{d-1}{d}}
\end{equation}
\end{enumerate}
\end{lem}

\begin{proof}
For any connected set $A$ such that $2 \le |A| < \infty$, we have that
\begin{equation}\label{edgevertex}
|\p^* A| \leq |\p^*_e A| \leq (2d-1) |\p^* A|.
\end{equation}
Indeed, for the first inequality simply note that the map which associates to an edge $e \in \p^*_e A$ the endpoint of $e$ which belongs to $A$ is a map from $\p^*_eA $ to $\p^*A$ which is clearly onto. This proves the first inequality. Moreover, any $x \in \p^*A$ has at most $2d-1$ pre-images in this map (since for any $x \in \p^* A$ then there are at most $2d-1$ edges in $\p^*_e A$ connected to $x$, and at least one other edge must connect $x$ to the rest of $A$, as $A$ is connected). This proves the second inequality and thus \eqref{edgevertex}.

Consider the case $d =2$. We claim that  $|\p^*_e R| \leq |\p^*_e A|$ (then we will see that \eqref{iso2} follows directly from \eqref{edgevertex}). Let $(x,x+e) \in \p^*_e A$, for some $e \in \set{ \pm e_i}$, where $e_i$ are the standard unit vectors of $\Z^2$, where
$x \in A$ and $x+e \notin A$. Note that since $x \in A$, we also have $x \in R$.  Thus, there exists a (necessarily unique) $k \geq 0$ such that $ y = x+k e \in R$ and
$  x + z e \not\in R$ for all $z > k$.  Thus, $(x+ k e, x+ (k+1)e) \in \p^*_e R$.
Hence we can define a map $\phi: \p^*_e A \to \p^*_e R$ by setting:
$$\phi((x,x+e)) = (y, y + e).$$
In words, we start from $(x, x+e)$ and travel in the direction $e$ until we leave $R$. This defines an edge in the outer edge boundary of $R$.

We claim that $\phi:\p^*_e A \to \p^*_e R$ is onto.
This follows since if
$(y,y+e) \in \p^*_e R$, then
considering the line $L = \set{ y - k e \ : \ k \geq 0 }$,
it must be that $L \cap A \neq \emptyset$, since otherwise
either $A$ would not be connected or $R$ would not be the smallest rectangle containing $A$.
(This relies on the assumption that $d=2$.)
Thus, there must exist some $k \geq 0$
such that $y - k e \in A$ and $y - z e \not\in A$ for any $z < k$.
Thus, the edge $(y - k e, y - (k-1)e)$ is in $\p^*_e A$,
and it is immediate that $\phi(y-ke, y-(k-1)e) = (y,y+e)$.

This proves that there is a map from $\p^*_e A$ onto $\p^*_e R$, and hence $|\p^*_e R | \le |\p^*_e A|$. Therefore, by \eqref{edgevertex},
$|\p^* R| \leq |\p^*_e R| \leq |\p^*_e A| \leq 3|\p^* A|$, which proves \eqref{iso2}.

For the general case $d\ge 3$ we use the discrete Loomis--Whitney inequality (Theorem 2 in \cite{LoomisWhitney}), which states that if $A_i$ is the projection of $A$ onto $\Z^{d-1}$ along the $i$th coordinate then
\begin{equation}\label{LW}
|A|^{d-1} \le \prod_{i=1}^d |A_i|.
\end{equation}
For each $1\le i \le d$ and each vertex in $z \in A_i$ consider the line $L$ going through $z$ and which is parallel to the $i$th coordinate axis. It intersects $A$ in at least one vertex (assume for simplicity and without loss of generality that $A$ does not intersect any hyperplane where one of the coordinates is 0). The first and last such intersections with $A$ necessarily correspond to two edges in $ \p_e^* A$, since the rest of the line lies in $\Z^d \setminus A$ and is unbounded. Thus to each vertex in $A_i$ one can associate two edges in $ \p^*_e A$. Note that for two distinct vertices $z$ and $w$ the corresponding edges will be pairwise distinct. Hence $|A_i| \le | \p^*_e A | / 2$ for each $1\le i \le d$.  We deduce, using the arithmetic geometric inequality and \eqref{edgevertex},
$$
\prod_{i=1}^d |A_i| \le  \left( \frac1d \sum_{i=1}^d |A_i| \right)^{d} \le \left(\frac1{2d} |\p^*_e A|\right)^{d}  \le \left(\frac{2d-1}{2d} |\p^* A|\right)^{d}.
$$
Combining with \eqref{LW} this gives the desired result.
\end{proof}

\subsection{Proof of condensation}

\label{S:proofs}

We will prove the following more precise statement of Theorem \ref{T:2dimGibbs}.

\begin{thm}
  \label{T:2dimGibbs+}
  Let $d \ge 2$.
  Fix $\beta_0>0$ and let $\beta > \beta_0 $.
Let $\gamma$ be as in Proposition \ref{P:lbZ}.
Then,
  \begin{equation}
\label{Teq:2d+cond}
\mu_t \Big[ \diam(R_t) \geq \frac1{\sqrt{2\gamma}}   \left( \tfrac{t}{\beta}\right)^{1/(d+1)}
 \Big] \geq 1 - C \exp \sr{ - \gamma t^{1 - 2/(d+1)}  \beta^{2/(d+1)} }  ,
  \end{equation}
and if $d = 2$ then
$$ \mu_t \Big[  \diam(R_t) \leq 6\gamma \left( \tfrac{t}{\beta}\right)^{1/3}
 \Big] \geq 1 - C \exp \sr{ - \gamma t^{1/3}  \beta^{2/3} } . $$

 Moreover, for all $d \geq 2$,
 $$ \mu_t \Big[ |R_t| \leq (2\gamma)^{d/(d-1)} \sr{ \tfrac{t}{\beta} }^{d/(d+1) } \Big] \geq 1 - C \exp \sr{ - \gamma t^{1 - 2/(d+1)}  \beta^{2/(d+1)} } . $$
\end{thm}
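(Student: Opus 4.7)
The plan is to prove all three inequalities simultaneously via the energy--entropy splitting
\[
\mu_t(E) \le Z^{-1}\, \E\br{ e^{-\beta |\p R_t|} 1_E } ,
\]
using Proposition \ref{P:lbZ} for the denominator. With the choice $L = \lceil (t/\beta)^{1/(d+1)}\rceil$, the two competing terms $\beta L^{d-1}$ and $t/L^2$ are both of order $t^{(d-1)/(d+1)} \beta^{2/(d+1)}$, which is exactly the exponent appearing in each claimed tail bound. The rest of the work, for each bad event, amounts to producing a numerator estimate that beats the $Z^{-1}$ exponential by a constant factor; this factor can always be arranged by tuning the constants $c_1$ or $c_2$ (small for lower bounds on diameter, large for upper bounds on diameter/volume).

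For the lower bound on the diameter, I would simply drop the Gibbs weight. On $\set{\diam(R_t) \le D}$, since $0 \in R_t$, the walk is confined to $B(0, 2D)$ throughout $[0,t]$. By the standard estimate comparing the confinement probability to the principal Dirichlet eigenvalue on a box of side $\asymp D$,
\[
\Pr\sr{X_s \in B(0,2D) \text{ for all } s \le t} \le C \exp(-c t/D^2).
\]
Taking $D = c_1 (t/\beta)^{1/(d+1)}$ with $c_1$ sufficiently small, $ct/D^2 = (c/c_1^2)\, t^{(d-1)/(d+1)} \beta^{2/(d+1)}$ beats the exponential growth of $Z^{-1}$ in Proposition \ref{P:lbZ}, and the polynomial prefactor $(t/\beta)^{d/2}$ is absorbed for large $t$.

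The upper bound on the volume uses Lemma \ref{lem:isoperimetric}(ii) together with the trivial inclusion $\p^* A \subseteq \p A$, giving $|\p R_t| \ge c |R_t|^{(d-1)/d}$. Consequently, on $\set{|R_t| \ge V}$ the Gibbs weight satisfies $e^{-\beta |\p R_t|} \le e^{-c\beta V^{(d-1)/d}}$, so $\mu_t(|R_t| \ge V) \le Z^{-1} e^{-c\beta V^{(d-1)/d}}$. With $V = c_2 (t/\beta)^{d/(d+1)}$ and $c_2$ large, $c\beta V^{(d-1)/d} = c\, c_2^{(d-1)/d}\, t^{(d-1)/(d+1)} \beta^{2/(d+1)}$ dominates the $Z^{-1}$ exponential. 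The planar diameter upper bound is analogous but uses Lemma \ref{lem:isoperimetric}(i): since $R_t$ is connected, its smallest enclosing rectangle $R$ satisfies $|\p^* R| \le 3|\p^* R_t| \le 3|\p R_t|$, and the perimeter of $R$ is at least $\sqrt{2}\diam(R_t) - O(1)$. This forces $|\p R_t| \ge c\, \diam(R_t)$ in $d=2$, and the same argument as for the volume bound, applied with $D = c_2 (t/\beta)^{1/3}$ large, finishes.

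I expect each individual step to be routine once Proposition \ref{P:lbZ} and the isoperimetric inequalities are in hand. The only genuine conceptual sensitivity is the diameter upper bound, which relies decisively on the $d=2$ isoperimetric fact of Lemma \ref{lem:isoperimetric}(i): a connected planar set has boundary comparable to the perimeter of its bounding rectangle. No direct analogue holds in $d \ge 3$, because a long thin tube is connected with small boundary but large diameter. This is precisely why the theorem states a diameter upper bound only in the planar case, while the volume upper bound survives in all dimensions.
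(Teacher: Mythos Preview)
Your proposal is correct and follows essentially the same route as the paper: combine Proposition~\ref{P:lbZ} for $Z^{-1}$ with, respectively, a confinement estimate (the paper uses one-dimensional coordinate projections via Lemma~\ref{L:stayinsquare} rather than a $d$-dimensional ball, but this is cosmetic), Lemma~\ref{lem:isoperimetric}(ii) for the volume bound, and Lemma~\ref{lem:isoperimetric}(i) for the planar diameter upper bound. Your diagnosis of why the diameter upper bound is restricted to $d=2$ is also exactly the paper's.
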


\begin{proof}
Recall that by Proposition \ref{P:lbZ}
\begin{equation}
Z(t, \beta) \ge  \exp\left(- \gamma t^{1- 2/(d+1)} \beta^{2 / (d+1)} \right) .
\label{Zlowerbound}
\end{equation}

We start with the lower bound on the diameter. We require the following standard estimate. Let $R^\square_t$ denote the smallest $d$-dimensional box containing $R_t$. For $1\le i \le d$, let $J^i_t$ denote the length of the projection of $R^\square_t$ (or equivalently $R_t$) onto the $i$th coordinate axis.

\begin{lem}
  \label{L:stayinsquare}
We have
  $$
  \Pr_0 [  J_t^i \le n ] \le n  \exp \sr{  - t \frac{\pi^2}{2n^2}  }
  $$
\end{lem}

\begin{proof}
  Under $\Pr_0$, the coordinates $X^1_t,  \ldots, X^d_t$ are independent continuous time (with rate $2$) simple random walks on $\Z$. We just focus on the first coordinate, $X_t = X^1_t$, and compute
  $
  \Pr_x(T > t)
  $
  where $x \in \{1, \ldots, J\}$ and $T = \inf\{t \ge 0: X_t \notin [1,J-1]\}$.
  Let $\cL$ denote the generator of (rate 1) simple random walk on $\Z$, and let $\phi(x) = e^{i \pi x / J}$. It is trivial to check  that
  $$
  \cL \phi(x) = - \lambda \phi(x)
  $$
  for all $x \in \Z$, where $\lambda = 2(1- \cos(\pi/J))$. Thus if we let $\psi(t,x) = e^{\lambda t} \sin(\pi x / J)$ we have
  $$
  \frac{\p}{\p t} \psi + \cL \psi = 0
  $$
  and hence $M_t: = e^{\lambda t} \sin(\pi X_t / J)$ is a martingale. Consequently, applying the optional stopping time theorem at the time $t \wedge T$ (which is bounded), and the inequality $\sin(u) \ge (2/\pi) u$ valid for $0\le u \le \pi/2$, yields
  \begin{align*}
  \sin(\pi x/J) &= \E_x( e^{\lambda  t} \sin(\pi X_{t \wedge T}/J))\\
  & \ge e^{\lambda t} \frac{2}{J} \Pr_x( T > t).
  \end{align*}
  Therefore,
  $$
  \Pr_x(T> t) \le J e^{-\lambda t}.
  $$
  Now, $\lambda = 2(1- \cos(\pi /J)) \ge \pi^2/{(2 J^2)}$ for $J$ large enough, and the result follows.
\end{proof}

We now deduce from Lemma \ref{L:stayinsquare} a lower bound on the diameter of $R_t$ under $\mu_t$.
Let $J^1_t, \ldots,J^d_t$ be the side-lengths of $R^{\square}_t$. Let $N = \min \{ J^1_t, \ldots, J^d_t \}$. We will prove the stronger statement that $N \ge c (t/\beta)^{1/(d+1)}$ with high probability.

By Lemma \ref{L:stayinsquare} and Proposition \ref{P:lbZ},
for an integer $n>0$,
\begin{align*}
\mu_t [ J^1_t \leq n ]
& \leq Z(t,\beta)^{-1} n C \exp \sr{ - \frac{\pi^2 t}{ 4 d n^2 } } \\
&\leq C n \exp \sr{ \gamma  t^{1- 2/(d+1)} \beta^{2 / (d+1)}  - \frac{\pi^2}{2n^2} t }  .
\end{align*}
Thus, if
$ n = (1/\sqrt{2\gamma} )( t/\beta)^{1/(d+1)} $ and since $\pi^2\ge 2$,
we get that
$$ \mu_t [ J^1_t  \leq n ] \leq Cn \exp \sr{ - \gamma t^{1-2/(d+1)} \beta^{2/(d+1)}  } . $$
Of course, we get the same bound replacing $J^1_t$ by $J^i_t$. Therefore,
\begin{equation}\label{lbN}
\mu_t [ N\le n ] \le O(t/\beta)^{d+1} \exp \sr{ - \gamma t^{1-2/(d+1)} \beta^{2/(d+1)}  }.
\end{equation}
In particular, it holds that with high $\mu_t$-probability
$$
\diam(R_t) \geq \frac1{\sqrt{2\gamma}} (t/\beta)^{1/(d+1)}  . $$

We now turn to the upper bound on the diameter in dimension $d=2$.
We make the following observation. In dimension $d=2$, if we know that the diameter of a shape $G$ is $\ge M$ for some large $M$ then we will see that it automatically follows (by Lemma \ref{lem:isoperimetric}) that $|\p G | \ge c M$. This ensures that the energy associated to this particular shape is at least $c \beta M$. This is enough for proving the theorem in the $d=2$ case. [On the other hand, in dimension 3 and higher, such a simple relationship is no longer true: if $\diam(G) \ge M$ then we can only infer that $| \p G | \ge c M$, translating into an energy cost of $c \beta M$. This is far less than what we need, since we believe the relevant energy contributions are of order $\beta M^{d-1}$. The issue is that a shape could have a big diameter in one direction and be very ``thin" along  other directions.]

More precisely, recall that $R_t^{\square}$ is a $J_t^1 \times J_t^2$ rectangle.
Lemma \ref{lem:isoperimetric} tells us that
$| \p R_t| \geq |\p^* R_t | \geq \tfrac13 |\p^* R_t^{\square} | = \tfrac23 \cdot (J_t^1 + J_t^2)$.
Thus,
\begin{align*}
\mu_t [ J_t^i > m ] & \leq Z(t,\beta)^{-1} \sum_{k=m+1}^\infty \exp ( - \beta \tfrac23 k ) \\
& \leq C \exp ( - \beta \tfrac23 m + \gamma t^{1/3} \beta^{2/3} )   .
\end{align*}
If $m = 3\gamma (t /\beta)^{1/3}$ this probability is at most
$C\exp ( - \gamma t^{1/3} \beta^{2/3} )$.  A union bound over $i=1,2$
give that in particular, $\diam (R_t) \le 2 m$ with high probability,
which concludes the proof of the first part of Theorem \ref{T:2dimGibbs+}.

We turn to the second part of the proof which yields an upper bound on the volume of $R_t$ in all dimensions $d\ge 2$. For this we note that by Lemma \ref{lem:isoperimetric}, if $|R_t| \ge m$ then $|\p R_t| \ge |\p^* R_t | \ge (2d/ (2d-1)) m^{(d-1)/d} \ge m^{(d-1)/d}$, and so almost surely on this event, $ \exp( - \beta H(\omega)) \le \exp ( -  \beta m^{(d-1)/d} )$.
Consequently,
\begin{align*}
\mu_t [ | R_t | \geq m ] & \le Z(t,\beta)^{-1} \exp ( - \beta m^{(d-1)/d } ) \cdot \Pr [ |R_t| \geq m ] \\
& \leq \exp \sr{ - \beta m^{(d-1)/d } + \gamma t^{1-2/(d+1)} \beta^{2/(d+1)} } .
\end{align*}
If $m^{(d-1)/d} = 2\gamma ( \tfrac{t}{\beta} )^{(1 - 2/(d+1) ) }$,
or equivalently, $m =  (2\gamma)^{d/(d-1)}( \tfrac{t}{\beta})^{ d / (d+1) }$, this probability is at most
$\exp ( - \gamma t^{1-2/(d+1) } \beta^{2/(d+1)} )$.
This completes the upper bound on the volume in all dimensions and thus the proof of the theorem.
\end{proof}

\subsection{Proof of Theorems \ref{Tvariant} and \ref{T:Cond2dim}}

We explain how to adapt the arguments of the proof of Theorem \ref{T:2dimGibbs} to give the proof of Theorem \ref{T:Cond2dim}.
Let $K>0$ be large enough and let $S' = \cup_{r > K} S_r$. Let $\cB' = \{ \forall x \in S': L(t, x) \ge \beta\}$. Let $ \cG = \cG'_t = \cB' \cap \cS \cap \cD$. Then the same arguments as in \eqref{eq:G}  show that
$
\Q(\cG'_t) \ge 1/4
$, provided that $K$ is a sufficiently large constant. The only difference with \eqref{eq:G} is that it no longer suffices to bound the expected number of vertices that were not visited by time $t$ as in Lemma \ref{L:ER}, which followed directly from Proposition \ref{P:exphit}. Instead, we need to show that the local time at every vertex in $S'$ is greater than $\beta$ with probability greater than $1/2$ say. However this is a direct consequence of the lower bound large deviations discussed in Remark \ref{R:LTconc_low}.

We deduce that
$$
\Pr (\cG') \ge \exp ( - \gamma t^{1-2/(d+1)} \beta^{2 /(d+1)})
$$
for some large enough constant $\gamma$ depending only on $\beta_0$ and $d$.
Assume that $\cG'_{t}$ holds. In the next $t$ units of time, we make sure that the each of the remaining $O(KL^{d-1})$ vertices of $S\setminus S'$ are visited at least $\beta$ units of time each, as follows. For each $1\le k \le K$, we visit each vertex in $S_k$ in clockwise order, starting from $( k, 0, \ldots, 0)$. At each new vertex, the walk remains at least $\beta$ and at most $2\beta$ units of time. When the walk has visited each vertex of $S_k$, it moves on to $S_{k+1}$. The total amount of time spent doing so is at most $2 \beta K L^{d-1} \le  2K t /L^2 $, which is much less than the $t$ units of time in which we want to achieve this, since by assumption $\beta =o( t) $. In the remaining amount of time, the walk is free to do what it wants, provided it stays in $S$.

If all these conditions are fulfilled, it is clear that $R_{2t} = S$ and that each vertex has a local time greater than $\beta$, so $\cE_{2t}$ holds. The probability of visiting every vertex in this  prescribed order immediately after $t$ is at least $\exp( - c \beta K L^{d-1}  ) $ for some $c < \infty$. The probability of remaining in $S$ after that (for a time necessarily shorter than $t$) is easily seen to be at least $\exp ( - ct /L^2 )$ and hence at least $\exp ( - c t^{1-2/(d+1)} \beta^{2/(d+1)})$.  All in all, we deduce
\begin{equation}
\label{lbEt}
\Pr (\cE_{2t}) \ge \exp ( - \gamma  t^{1- 2/(d+1)} \beta^{2/(d+1)}),
\end{equation}
and thus (changing $t$ into $t/2$) the same inequality holds with the left hand side replaced by $ \Pr(\cE_t)$.
This argument also shows that if $\tilde Z(t, \beta)$ is the partition function corresponding to the Hamiltonian $\tilde H = \sum_{x \in \p R_t} L(t,x)$ in \eqref{Hvariant}, then
\begin{equation}\label{tildeZlb}
\tilde Z(t, \beta) \ge \exp ( - \gamma t^{1 - 2/(d+1)} \beta^{2/(d+1)}).
\end{equation}
(In fact, this could also be deduced from Corollary \ref{cor:tail}.)

Now, we claim that for any finite set $G$ of vertices,
\begin{equation}\label{expcostEt}
\Pr(R_t = G, \cE_t) \le \exp( - \beta  |\p G|).
\end{equation}
For each $x \in \p G$, let $y$ be a neighbour of $x$ such that $y \notin G$. Consider the event $J_{xy}(t)$ that by time $t$ there has never been a jump from $x$ to $y$.
On $\cE_t$, $x$ is visited at least $\beta$ units of time. While at $x$, the rate of jumping to $y$ is of course $1$. Let $E_{xy}$ be independent exponential random variables with rate $1$, which represents the amount of time a particle would have to wait before jumping to $y$. Thus $J_{xy}(t) \cap \cE_t \subset \{E_{xy} > \beta\}$. Hence
$$
\Pr(R_t = G, \cE_t) \le \Pr ( \cap_{x \in \p G} J_{xy}(t) \cap \cE_t) \le \Pr( \cap_{x \in \p G} E_{xy} > \beta)   \le e^{- \beta  |\p G| }
$$
by independence of the random variables $E_{xy}$. Thus \eqref{expcostEt} is established.

Putting together \eqref{lbEt} and \eqref{expcostEt} (resp. \eqref{tildeZlb} and the definition of $\tilde \mu$), the proof of Theorem \ref{T:Cond2dim} (resp. Theorem \ref{Tvariant}) proceeds essentially as in Theorem \ref{T:2dimGibbs+}. More precisely, let $J_t^1, \ldots, J_t^d$ be the dimensions of $R_t$ in each coordinate. The lower bound in \eqref{lbEt} implies exactly as in \eqref{lbN} that
$$
\Pr( \min(J_t^1, \ldots, J_t^d) \ge n | \cE_t) \to 1
$$
as $t \to \infty$, where $n = (1/\sqrt{2\gamma})(t/\beta)^{1/(d+1)}$. In particular, conditioned on $\cE_t$, with high probability we have $\diam (R_t) \ge n$.

For the upper-bound on $\diam(R_t)$ in the case $d=2$, or the upper bound on $|R_t|$ in the general case $d\ge 2$, we proceed as follows. We focus on the bound on $|R_t|$ in the general case $d\ge2$, which requires a few more ideas. For each edge $e$, consider the unit area plaquette $p(e)$, orthogonal to $e$ and such that the centre of $p(e)$ coincides with the midpoint of the edge $e$.
\begin{dfn}
By a self-avoiding surface, we mean a connected union of plaquettes with disjoint $(d-1)$-dimensional interior.
\end{dfn}
When $d=2$, this is essentially equivalent to a self-avoiding walk. Let $\cS_{n}$ denote the set of self-avoiding surfaces with $n$ plaquettes, and contained in a ball of radius $n$ about the origin. Let $c_n = |\cS_n|$
and let
\begin{equation}\label{beta0}
\alpha = \alpha(d) = \limsup_{n \to \infty} c_n^{1/n} \qquad \beta_0 = \log \alpha.
\end{equation}
Note that when $d = 2$, the limsup is a limit and is (essentially by definition) equal to the connective constant of $\Z^2$. It is easy to check that $1\le \alpha \le (2d)^{2d} < \infty$ in general, which is all we will use.

To each finite $G \subset \Z^d$ we can associate a finite self-avoiding surface, where the plaquettes are obtained by considering each of the edges $e = (x,y)$, with $x \in G$ and $y \in \Ext(G)$.
Let $\cS_{j_1, \ldots,j_d}$ denote the set of surfaces where the diameter in each direction $1, \ldots, d$, does not exceed $j_1, \ldots, j_d$ respectively.
Let $\Sigma$ be the (random) self-avoiding surface associated with $R_t$. For a given self-avoiding surface $\sigma \in \cS_j$, we have by the same argument as in \eqref{expcostEt} (since each plaquette corresponds to an edge $(x,y)$ such that the corresponding exponential random variable $E_{xy}$ satisfies $E_{xy} > \beta$, and these events are independent even for edges which share vertices)
\begin{equation}\label{expcostGamma}
\Pr ( \Sigma = \sigma , \cE_t ) \le \exp ( -  \beta j  ),
\end{equation}
Let $\beta_1 > \beta_0 = \log \alpha$ and assume that $\beta>\beta_1$.  Let $\beta'_1  = (\beta_0 + \beta_1 )/2$. Note that for $n$ large enough, we have $|\cS_n| \le \exp ( \beta_1' n)$.

Therefore, by \eqref{expcostGamma},
\begin{align*}
\Pr [\Sigma \in \cS_n | \cE_t] & \leq  \Pr(\cE_t)^{-1} \sum_{j =n}^{\infty}  e^{ \beta'_1j } e^{ - \beta j}   \\
& \le C \exp \{ \gamma t^{1  - 2/(d+1) } \beta^{2/(d+1)}
 -  ( \beta - \beta'_1)  n \}
\end{align*}
where $C = \sum_{j\ge 0} \exp(- (\beta - \beta'_1) j) \le \sum_{j} \exp ( - j (\beta_1 - \beta'_1) /2) < \infty$ since $\beta_1 > \beta'_1$.
Let
$$n = \left\lceil \frac{\gamma t^{1 - 2/(d+1) } \beta^{2/(d+1)}} {2(\beta - \beta'_1)} \right\rceil \le C \gamma (t/\beta)^{\frac{d-1}{d+1}}$$
where $C$ depends only on $\beta_1$. Then we deduce
$$
\Pr [\Sigma \in \cS_n | \cE_t ] \to 0,
$$
Hence $|\p^* R^\square_t| \le n$ with high conditional probability given $\cE_t$, and thus (by Lemma \ref{lem:isoperimetric})
$$
|R_t| \le   [(2d -1)  n]^{d/ (d-1)} \le C \gamma (t/\beta)^{\frac{d}{d+1}}
$$
with high conditional probability, as desired.

\begin{remark}
It is interesting to note that the lower bound on $\diam(R_t)$ is valid for all $\beta>0$ (i.e., does not assume $\beta> \beta_0$).
\end{remark}

\section{Open problems and conjectures}
\label{S:opb}
We finish the paper with a brief discussion of some open problems raised by our results.

\paragraph{Limit shape theorem.} The most basic question is to ask whether the constants $c_1$ and $c_2$ appearing in Theorem \ref{T:2dimGibbs} really need to be different from one another, and if indeed $t^{1/(d+1)}$ is the right order of magnitude in all dimensions $d\ge 2$. We make the following more precise conjecture:



\medskip \noindent \textbf{Conjecture.} There exists a nonrandom closed, bounded and convex set $S = S(\beta) \subset \R^d$ such that
$$
\inf_{z\in \R^d}d_{\Haus} (\frac{R_t}{\diam(R_t)} ; z+ S) \to 0
$$
in probability, where $d_{\Haus} $ stands for Hausdorff distance.

An equivalent way of stating the conjecture is that there exists a deterministic $S$ (compact and convex) such that if we translate the range $R_t$ to have a centre of mass at the origin, then the resulting set is close to $S$ with high probability in the Hausdorff sense. This is similar to the situation in \cite{bolthausen}.

\medskip Once the existence of $S$ is established one may ask numerous questions about its geometry. For instance, does it have any (macroscopic) flat facet?

\medskip Studying the extreme cases $\beta \to \infty$ and $\beta \to 0$ should also be interesting. Further to the above open problem, we conjecture that as $\beta \to \infty$, $S(\beta)/\diam(S(\beta))$ converges in the Hausdorff sense to a diamond of unit diameter. This is because the diamond is the minimiser of the isoperimetric problem for the vertex-boundary:
$
\min_{|S| = k} |\pd S|
$
is attained for a diamond $S = \{x, y : |x| + |y| \le n\}$, whenever $k = 2n(n+1)$. Since this conjecture was first made, a very closely related result has been proved by Biskup and Procaccia \cite{BP1, BP2}. At the other extreme, as $\beta \to 0$ it is natural to believe that the lattice effects become less and less relevant, so that the limit shape becomes rotationally invariant. Thus we conjecture that $S(\beta) / \diam(S(\beta))$ converges as $\beta \to 0$ in the Hausdorff sense to a ball of unit diameter. This seems intuitively related to the result of Duminil--Copin on the limit of the Wulff crystal for percolation as $p \to p_c$ on the triangular lattice (\cite{DuminilCopin}).

\medskip We make similar conjectures for the case of a random walk conditioned on $\{ L_t(x) \ge \beta, \forall x \in R_t\}$. However, in the case $\beta \to \infty$ we believe that the limit should be a square with unit diameter instead of a diamond. This is because by \eqref{expcostEt}
$$
\Pr[ R_t = G , \Ee_t ] \leq \exp ( - \beta | \p_e G | )
$$
where $\p_e G$ denotes the edge boundary of a graph $G$. Thus, when $\beta\to \infty$, it is reasonable to guess that $S(\beta)$ should minimise its edge boundary, rather than its vertex boundary, and hence be a square rather than a diamond. As we do not yet know whether the behaviour described in Theorem \ref{T:Cond2dim} persists for $\beta \le \beta_0$, we do not make any conjecture for the case $\beta \to 0$.

\paragraph{Fluctuations.} The question of the roughness of the boundary of the shape is of considerable interest.
In the case of two-dimensional percolation, these fluctuations are known with considerable precision. For instance (see \cite{UzunAlexander} and \cite{Alexander}), the \emph{maximal local roughness}, which measures the maximal distance from a point on the boundary of the shape to the polygonal hull of that shape (and hence the size of inward deviations),
 is of order $(\text{diameter})^{1/3}$. More recently, Hammond \cite{Hammond1} established an extremely precise result in this direction which gives a sharp logarithmic power-law correction (stated in the greater generality of the $q$-state Potts model with $q\ge 1$). This exponent and related ones are common to a large class of two-dimensional interfaces, including the KPZ (Kardar--Parisi--Zhang) universality class. We conjecture that this is the case here as well; and since the diameter itself is of order $t^{1/3}$, this leads us to the following:

 \medskip \noindent \textbf{Conjecture.} For any $\beta >0$, with high $\mu_t$-probability, the maximum local roughness of $R_t$ is of order $t^{1/9}$, up to logarithmic corrections.





\begin{thebibliography}{1}

\bibitem{Alexander} K. Alexander. Cube-root boundary fluctuations for droplets in random cluster models. \emph{Comm. Math. Phys.}, 224(3):733--781, 2001.

\bibitem{wulff1} K. Alexander, J.T. Chayes and L. Chayes. The Wulff construction and asymptotics of the
finite cluster distribution for two-dimensional Bernoulli percolation.  \emph{Comm. Math. Phys.} 131, 1-50 (1990).


\bibitem{AS1} A. Asselah and B. Schapira. Boundary of the range of transient random walk. \emph{Probab. Theory Relat. Fields} (2017) 168:691--719

\bibitem{AS2} A. Asselah and B. Schapira. Moderate deviations for the range of a transient random walk: path concentration.  arXiv:1601.03957 (2016).


\bibitem{BP1} M. Biskup and E. Procaccia. Eigenvalue vs perimeter in a shape theorem for self-interacting random walks.
arXiv:1603.03817, to appear in \emph{Ann. Appl. Probab.}

\bibitem{BP2} M. Biskup and E. B. Proccacia. Shapes of drums with lowest base frequency under non-isotropic perimeter constraints. arXiv:1603.03871



\bibitem{Bodineau} T. Bodineau. The Wulff construction in three and more dimensions. \emph{Commun. Math. Phys.}, 207(1):197--229, 1999.

\bibitem{BodineauIoffeVelenik} T. Bodineau, D. Ioffe, and Y. Velenik. Rigorous probabilistic analysis of equilibrium crystal shapes. \emph{J. Math. Phys.,} 41(3):1033--1098, 2000.





\bibitem{bolthausen} E. Bolthausen. Localization of a two-dimensional random walk with an attractive path interaction. \emph{Ann. Probab.}, 22, 875--918 (1994).


\bibitem{cerf} R. Cerf. The Wulff crystal in Ising and Percolation models. \emph{Lecture Notes in Mathematics} 1878, Springer. (Ecole d'\'et\'e de Probabilit\'es de Saint-Flour 2004.)

\bibitem{CerfPistora} R. Cerf and \'A. Pisztora. On the Wulff crystal in the Ising model. \emph{Ann. Probab.,} 28(3):947--1017, 2000.

\bibitem{DSC} P. Diaconis and L. Saloff-Coste. Comparison techniques for random walks on finite
groups. \emph{Ann. Probab.}, 21, 2131--2156 (1993).





\bibitem{DZ} A. Dembo and O. Zeitouni. \emph{Large Deviations Techniques and Applications.} Springer, Applications of Mathematics, Stochastic Modelling and Applied Probability, Second Edition.

\bibitem{DobrushinHryniv} R.L. Dobrushin and O. Hryniv. Fluctuations of shapes of large areas under paths of random walks.
\emph{Probab. Theor. Rel. Fields}. 102 (No. 3) (1995) 313--330.


\bibitem{DKSprelim1} R. L. Dobrushin, R. Koteck\'y, and S. B. Shlosman. Equilibrium crystal shapes -- a microscopic proof of the Wulff construction, Proc. of the XXIVth Karpacz Winter School Stochastic Methods in Mathematics and Physics (Karpacz, 1988) (R. Gielerak and W. Karwowski, eds.), World Scientific, Singapore, 1989, pp. 221--229

\bibitem{DKS} R. L. Dobrushin, R. Koteck\'y, S. B. Shlosman. Wulff crystal: a global shape from local interaction. AMS, Translations Of Mathematical Monographs 104, Providence (Rhodes Island.), 1992.






\bibitem{DV0} Donsker, M. D. and Varadhan, S. R. S. (1975). Asymptotic evaluation of certain Wiener integrals for large time. Proceedings of International Conference of Function Space Integration, Oxford, (1974), 15--33.

\bibitem{DV1}
Donsker, M. D. and Varadhan, S. R. S. (1975). Asymptotic evaluation of certain Markov process expectations for large time. I- IV \emph{Comm. Pure Appl. Math.} 28 (1--47), (279--301). 29, (389--461). 36, (183--212).



\bibitem{DVrw} Donsker, M. D. and Varadhan, S. R. S., (1979). On the number of distinct points visited by a random walk. \emph{Comm. Pure Appl. Math.} 32 721--747.






\bibitem{DuminilCopin} H. Duminil-Copin. Limit of the Wulff Crystal when approaching criticality for site percolation on the triangular lattice.  \emph{Electron. Commun. Probab.}
   Vol. 18 (2013), paper no. 93, 9 pp.

\bibitem{GH} G\"artner, J. and den Hollander, F. (1999). Correlation structure of intermittency in the parabolic Anderson model. \emph{Probab. Theory Relat. Fields} 114, 1--54.


\bibitem{Grimmett} Grimmett, G. {\em Percolation}. Grundlehren der mathematischen Wissenschaften, vol 321, Springer, 1999 (second edition).

\bibitem{Hammond1} A. Hammond. Phase separation in random cluster models I: uniform upper bounds on local deviation. \emph{Comm. Math. Phys.},  310, no. 2, 455--509, (2012).

\bibitem{dH} F. den Hollander. \emph{Large deviations}. Fields institute monographs, American Mathematical Society.



\bibitem{IoffeSchonmann} D. Ioffe and R. H. Schonmann. Dobrushin-Koteck\'y-Shlosman theorem up to the critical temperature. \emph{Commun. Math. Phys.}, 199(1):117--167, 1998



\bibitem{K} R. Koteck\'y. Statistical Mechanics of Interfaces and Equilibrium Crystal Shapes, Proc. of the IXth International Congress of Mathematical Physics (B. Simon, A. Truman, and I. M. Davies, eds.), Swansea, 1988, Adam Hilger, Bristol, 1989, pp. 148--163.

\bibitem{LoomisWhitney} L. H. Loomis and H. Whitney. An inequality related to the isoperimetric inequality. Bull. Amer. Math. Soc. 55 (1949), 961-962.

\bibitem{LyonsPeres} R. Lyons and Y. Peres. \emph{Probability on trees and networks.} Vol. 42. Cambridge University Press, 2016.



\bibitem{MinlosSinai} Minlos, R. A. F., and Sinai, J. G. (1967). The phenomenon of phase separation at low temperatures in some lattice models of a gas. I. Sbornik: Mathematics, 2(3), 335--395.

\bibitem{Pfister} C.-E. Pfister. Large deviations and phase separation in the two-dimensional Ising model. \emph{Helv. Phys. Acta} 64, n.7, 953--1054 (1991).


\bibitem{RogersWilliams} L.C.G. Rogers and D. Williams, 2000. Diffusions, Markov processes and martingales: Volume 2, It\^{o} calculus (Vol. 2). Cambridge university press.



\bibitem{lsc} Saloff-Coste, L. \emph{Lectures on finite Markov chains.} Lectures on probability theory
and statistics (Saint-Flour, 1996), Lecture Notes in Math., vol. 1665, Springer,
Berlin, 1997, pp. 301--413.






\bibitem{UzunAlexander} H. Uzun and K. Alexander. Lower bounds for boundary roughness for droplets in Bernoulli percolation. \emph{Probab. Theory Related Fields}, 127(1):62--88, 2003.

\bibitem{Wulff} Wulff, G. Zur Frage der Geschwindigkeit des Wachstums und der Aufl\"osung der Kristallfl\"achen. \emph{Zeitschrift fur Krystallographie und Mineralogie}, 34 (1901), 5/6, 449--530.

\end{thebibliography}

\end{document}